\newtheorem{theorem}{Theorem}
\newtheorem{lemma}{Lemma}
\newtheorem{claim}{Claim}
\newtheorem{subclaim}{Claim}[claim]
\newcommand{\ex}{{\rm ex}}
\newcommand{\en}{{\rm end}}
\begin{document}

\title{ On graphs without cycles of length $0$ modulo $4$}

\author{
Ervin Gy\H{o}ri\thanks{Alfr\'{e}d R\'{e}nyi Institute of Mathematics, Budapest, Hungary (\texttt{gyori.ervin@renyi.hu}, \texttt{tompkins.casey@renyi.hu}).} \and \hspace{-1em}
Binlong Li\thanks{School of Mathematics and Statistics,
Northwestern Polytechnical University, Xi’an, 710072, China.} \thanks{Xi'an-Budapest Joint Research Center for Combinatorics, Northwestern Polytechnical University, Xi'an, 710072,  China (\texttt{binlongli@nwpu.edu.cn}).}
\and \hspace{-1em}
Nika Salia\thanks{King Fahd University of Petroleum and Minerals, Dhahran, Saudi Arabia (\texttt{nikasalia@yahoo.com}).}
\and \hspace{-1em}
Casey Tompkins\footnotemark[1]
\and \hspace{-1em} Kitti Varga\thanks{Budapest University of Technology and Economics, Budapest, Hungary.} \thanks{MTA-ELTE Egerv\'ary Research Group, Budapest, Hungary (\texttt{vkitti@renyi.hu}).}
\and \hspace{-1em}
Manran Zhu\thanks{Corvinus University, Center for Collective Learning, Budapest, Hungary (\texttt{manran.zhu@uni-corvinus.hu}).}
}

\date{}
\maketitle

\begin{abstract}
Bollob\'as proved that for every $k$ and $\ell$ such that $k\mathbb{Z}+\ell$ contains an even number, an $n$-vertex graph containing no cycle of length $\ell \bmod k$ can contain at most a linear number of edges.  
The precise (or asymptotic) value of the maximum number of edges in such a graph is known for very few pairs $\ell$ and $k$. 
In this work we precisely determine the maximum number of edges in a graph containing no cycle of length $0 \bmod 4$.
\end{abstract}

\section{Introduction}
It is well-known that $n$-vertex graphs containing no even cycles can contain at most $\lfloor \frac{3}{2}(n-1) \rfloor$ edges.  
On the other hand, if only a set of odd cycles are forbidden, then taking a balanced complete bipartite graph yields $\lfloor \frac{n^2}{4} \rfloor$ edges, and this is sharp for sufficiently large $n$~\cite{simonovits1974extremal}. 
Given these observations it was natural to consider the extremal problem where for natural numbers $k$ and $\ell$ such that $k\mathbb{Z}+\ell$ contains an even number, all cycles of length $\ell \bmod k$ are forbidden. 
It was conjectured by Burr and Erd\H{o}s~\cite{erdos} that such a graph could contain at most a linear number of edges. 
This conjecture was proved by Bollob\'as~\cite{bollobas1977cycles}. 

Given the result of Bollob\'as, it is interesting to determine the smallest constant $c_{\ell,k}$ (where $k\mathbb{Z}+\ell$ contains an even number) such that every $n$-vertex graph with $c_{\ell,k} n$ edges must contain a cycle of length~$\ell \bmod k$. 
The problem of finding such an optimal $c_{\ell,k}$ was mentioned by Erd\H{o}s in~\cite{erdos1995some}. 
Various improvements to the general bounds on $c_{\ell,k}$ have been obtained~\cite{thomassen1983graph,thomassen1986paths,verstraete2000arithmetic}
culminating in a recent result of Sudakov and Verstra\"ete~\cite{sudakov} showing that for $3\le \ell < k$, the value of $c_{\ell,k}$ is within an absolute constant of the maximum number of edges in a $k$-vertex $C_\ell$-free graph. Thus, for even $\ell\ge 4$ the general problem of determining $c_{\ell,k}$ is at least as difficult as determining the Tur\'an number of $C_\ell$ (for which we only know the order of magnitude when $\ell\in \{4,6,10\}$). 

The precise value of $c_{\ell,k}$ is known for very few pairs $\ell$ and $k$.  
As mentioned above it is well-known that $c_{0,2}=\frac{3}{2}$. 
It was proved that $c_{0,3}=2$ by Chen and Saito~\cite{chen1994graphs}, which resolved a conjecture of Barefoot~et~al~\cite{barefoot1991cycles}.
The $n$-vertex graph avoiding all cycles of length $0\bmod 3$ with the maximum number of edges is the complete bipartite graph~$K_{2,n-2}$. 
In fact Chen and Saito~\cite{chen1994graphs} proved a stronger result (also conjectured by Barefoot~et~al~\cite{barefoot1991cycles}) that a graph of minimum degree at least~$3$ contains a cycle of length $0\bmod 3$, which implies the aforementioned results. 

 Dean, Kaneko, Ota and Toft~\cite{Deanetal} (see also Saito~\cite{saito1992cycles}) showed that every $n$-vertex $2$-connected graph of minimum degree at least~$3$ either contains a cycle of length $2 \bmod 3$ or is isomorphic to $K_4$ or $K_{3,n-3}$. 
 From this result it is easily deduced that for $n$ sufficiently large, $K_{3,n-3}$ maximizes the number of edges in a graph not containing a cycle of length $2 \bmod 3$. Consequently, $c_{2,3}=3$.  
 
 The situation for cycles of length $1 \bmod 3$ is less clear.  
 Dean, Kaneko, Ota and Toft~\cite{Deanetal} proved that every $2$-connected graph of minimum degree at least~$3$ and no cycle of length~$1 \bmod 3$ contains a Petersen graph as a subgraph. 
 This result was strengthened by Mei and Zhengguang~\cite{mei2001cycles} who showed that in fact every such graph contains a  Petersen graph as an induced subgraph. 
 However, it is not clear how one could derive a result on the maximum number of edges from these results. 
 Thus, determining $c_{1,3}$ remains open. 
 A general estimate of $c_{\ell,3} \le \ell+2$ was given in the original paper of Erd\H{o}s~\cite{erdos}.

Gao, Li, Ma and Xie~\cite{gao2022two} proved that an $n$-vertex graph $G$ with at least $\frac{5}{2}(n-1)$ edges contains two consecutive even cycles unless $4\mid (n-1)$ and every block of $G$ is isomorphic to $K_5$. 
This result settled the $k=2$ case of conjecture of Verstra\"ete~\cite{verstraete2016extremal} about the maximum number of edges in graphs avoiding cycles of $k$ consecutive lengths. 
As a consequence of this result Gao, Li, Ma and Xie proved that $c_{2,4}=\frac{5}{2}$.

 In the present paper we will consider the problem of maximizing edges in a graph containing no cycle of length~$0 \bmod 4$.
 This is the last remaining class modulo~$4$ since the others contain only odd numbers. 
 An extensive investigation of such graphs was undertaken by Dean, Lesniak and Saito~\cite{dean1993cycles}.  
 They proved, among several other results, that $c_{0,4}\le 2$.  
 
 Our main result is an exact determination of~$c_{0,4}$. 
 In fact we determine a sharp upper bound on the number of edges in a graph containing no cycle of length $0 \bmod 4$, and as a consequence we obtain $c_{0,4} =\frac{19}{12}$.

\begin{theorem}\label{ThMain}
Let $G$ be an $n$-vertex graph. If $e(G)>\lfloor\frac{19}{12}(n-1)\rfloor$, then $G$ contains a cycle of length $0\bmod 4$.
\end{theorem}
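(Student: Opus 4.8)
The plan is to prove the equivalent statement that every $n$-vertex graph $G$ with no cycle of length $0\bmod 4$ satisfies $e(G)\le\lfloor\frac{19}{12}(n-1)\rfloor$, arguing by contradiction with a counterexample $G$ minimizing $n$. Write $f(n):=\lfloor\frac{19}{12}(n-1)\rfloor$. First I would reduce to $2$-connected graphs. If $G$ has a cut vertex $v$, decompose $G=G_1\cup G_2$ with $V(G_1)\cap V(G_2)=\{v\}$ and $|V(G_i)|=n_i$, so $n_1+n_2=n+1$. Each $G_i$ is again free of cycles of length $0\bmod 4$, so minimality gives $e(G_i)\le f(n_i)$; since the floor function is superadditive, $f(n_1)+f(n_2)\le f(n_1+n_2-1)=f(n)$, contradicting $e(G)=e(G_1)+e(G_2)>f(n)$. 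Hence $G$ is $2$-connected. The same computation shows that the sharp examples are exactly the graphs all of whose blocks are the densest admissible $2$-connected block, each contributing $19$ edges and $12$ to $n-1$; describing and verifying such a block (a $C_{0\bmod 4}$-free graph on $13$ vertices with $19$ edges) supplies the matching lower bound and hence $c_{0,4}=\frac{19}{12}$.

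Next I would recast the desired bound for a $2$-connected $G$ in terms of the cyclomatic number $\mu=e(G)-v(G)+1$: a short computation shows that $e(G)\le\frac{19}{12}(v(G)-1)$ is equivalent to $\mu\le\frac{7}{12}(v(G)-1)$. Fixing an open ear decomposition $G=P_1\cup\cdots\cup P_\mu$, with $P_1$ a cycle and each later $P_i$ a path of length $\ell_i$, this is in turn equivalent to $\sum_i\ell_i\ge\frac{19}{7}\mu$, i.e. the \emph{average ear length is at least} $19/7$. The base case of a single cycle is comfortably within the bound, since $\frac{\ell}{\ell-1}\le\frac32<\frac{19}{12}$ for every $\ell\ge3$; so the entire content is that chords (length-$1$ ears) and length-$2$ ears cannot be too numerous and must be paid for by longer ears.

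The engine controlling short ears is the elementary fact that if a cycle $C$ of length $c$ carries an ear of length $p$ whose endpoints split $C$ into arcs of lengths $d$ and $c-d$, then $G$ contains cycles of lengths $c$, $d+p$, and $(c-d)+p$; forbidding all three from being $0\bmod 4$ forces congruence restrictions on $c,d,p$ modulo $4$. The hard part will be to iterate this over configurations of \emph{several} interacting short ears — chords sharing a common cycle, nested and crossing chords, and pairs of length-$2$ ears creating theta- and subdivided-$K_4$ patterns — and to show that any configuration denser than the extremal gadget forces a cycle of length $0\bmod 4$. I expect this to be a substantial discharging or case analysis establishing that, away from a short explicit list of admissible dense gadgets, one can always either exhibit a cycle of length $0\bmod 4$ or delete an ear of length at least $3$ while preserving both $2$-connectedness and the ratio, thereby completing the induction.

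The two places where the precise constant $\frac{19}{12}$ (rather than a weaker bound) is won or lost, and hence the main obstacles, are the treatment of $2$-separations and the extremal analysis. Unlike cut vertices, a $2$-cut can create new cycles bridging the two sides, so it cannot be handled by naive superadditivity; instead one must track the two bridging path-lengths modulo $4$ across the separation, effectively working with a $3$-block (SPQR-type) decomposition in which virtual edges carry residues. Pinning down the finite list of saturating gadgets and verifying that the block realizing average ear length exactly $19/7$ is genuinely free of cycles of length $0\bmod 4$ is where the delicate counting lies, and I would expect this to consume the bulk of the proof.
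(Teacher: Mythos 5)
There is a genuine gap: your text is a plan rather than a proof. The entire combinatorial content of the theorem is contained in the sentence beginning ``The hard part will be to iterate this over configurations of \emph{several} interacting short ears \dots I expect this to be a substantial discharging or case analysis,'' and that analysis is never carried out. The reduction to $2$-connected graphs and the cyclomatic-number reformulation ($\mu\le\frac{7}{12}(n-1)$, equivalently average ear length at least $19/7$) are correct but elementary bookkeeping; nothing in the proposal establishes the needed control on short ears, handles the $2$-separations you correctly identify as dangerous, or produces the extremal gadget. As written, the argument could equally well be offered for any constant in place of $\frac{19}{12}$.

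It is also worth noting that the paper's actual route is structurally different from the one you sketch, and the difference is instructive. The paper's key structural tool is planarity: it first proves that every subdivision of $K_5$ or $K_{3,3}$ contains a $(0\bmod 4)$-cycle, so a counterexample is planar, and the final bound comes from Euler's formula applied to a planar embedding after showing the graph has at most one $3$-face, no $4$-face, and at most five $5$-faces (the last via Ramsey's theorem $r(3,3)=6$ applied to pairs of intersecting $5$-faces). This yields $e(G)\le\frac{3}{2}n-1$ for graphs with no ``good'' $2$-cut; the constant $\frac{19}{12}$ arises only from the separate classification of good $2$-cuts, whose outer pieces are shown to be one of two explicit gadgets ($T_1$, a triangle, or $T_2$, a $6$-cycle with a chord), and a counting argument over the two paths joining two extremal such cuts. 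Your ear-decomposition/discharging plan never invokes planarity, so you would need to rediscover, in that language, all of the mod-$4$ forbidden configurations the paper isolates (even theta graphs, odd necklaces, the subdivided $K_4$'s $H_3^e$, $H_4^o$, $H_4^e$, and the various cycle-plus-bridges lemmas); there is no evidence in the proposal that this can be organized into a finite check. A minor further inaccuracy: the sharp examples are not obtained solely from copies of the densest $13$-vertex block; for general $n$ the extremal graphs also require blocks on $8$, $3$, and $2$ vertices to realize $\lfloor\frac{19}{12}(n-1)\rfloor$ exactly.
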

Constructions attaining this upper bound for every $n\ge 2$ are given in Section~\ref{extremal}.

\section{Some preliminaries}

Let $G$ be a graph and $x,y\in V(G)$. A path from $x$ to $y$ is called an \emph{$(x,y)$-path}. If $X,Y$ are two subgraphs of $G$ or subsets of $V(G)$, then a path from $X$ to $Y$ is an $(x,y)$-path with $x\in X$, $y\in Y$, and all internal vertices in $V(G)\backslash(X\cup Y)$. A path (cycle) is \emph{even} (\emph{odd}) if its length is even (odd). The graph consisting of an odd cycle $C$, a path $P_1$ from $x$ to $C$ and a path $P_2$ from $C$ to $y$ with $V(P_1)\cap V(P_2)=\emptyset$ (not excluding the case that $P_1$ and/or $P_2$ are trivial), is called an \emph{adjustable path} from $x$ to $y$ (or briefly, an \emph{adjustable $(x,y)$-path}). Notice that an adjustable $(x,y)$-path contains both an even $(x,y)$-path and an odd $(x,y)$-path. For a path $P$ or a cycle $C$, we denote by $|P|$ or $|C|$ its length. We write $\en(P)=\{x,y\}$ if $P$ is a path or adjustable path from $x$ to $y$.

Denote by $\varTheta$ a graph consisting of three internally-disjoint paths from a vertex $x$ to a vertex $y$, and denote by $\varTheta^e$ such a graph where all three paths are even.
For $k=3,4$, define $H_k^o$ (respectively $H_k^e$) to be a subdivision of $K_4$ such that each edge of some $k$-cycle in the $K_4$ corresponds to an odd path (respectively, even path). Define the \emph{odd necklace} $N^o$ to be a graph consisting of an adjustable $(x_1,x_2)$-path $R_1$, an adjustable $(x_2,x_3)$-path $R_2$, an adjustable $(x_3,x_1)$-path $R_3$, such that $R_1,R_2,R_3$ are pairwise internally-disjoint.

\begin{lemma}\label{LeThetaNH}
  Each of $\varTheta^e$, $N^o$, $H_3^e$, $H_4^o$, $H_4^e$ contains a $(0\bmod 4)$-cycle.
\end{lemma}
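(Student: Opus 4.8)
The plan is to reduce all five cases to the single arithmetic observation that two internally disjoint $(x,y)$-paths form a cycle whose length is the sum of their lengths, so a $(0\bmod 4)$-cycle appears as soon as one can pick paths whose lengths sum to $0\bmod 4$. The base case is $\varTheta^e$, which has three internally disjoint even $(x,y)$-paths: since every even number is $0$ or $2$ modulo $4$, two of the three path-lengths agree mod $4$ by pigeonhole, and the cycle they span then has length $\equiv 0\pmod 4$. I would then use $\varTheta^e$ as a black box for the structures $H_3^e,H_4^o,H_4^e$.

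For $H_3^e$ I would show it always contains a copy of $\varTheta^e$. Put the even triangle on $v_1,v_2,v_3$ and let $P_1,P_2,P_3$ be the three legs from the remaining vertex $v_4$. Among the parities $|P_1|,|P_2|,|P_3|$ two coincide, say $|P_1|\equiv|P_2|\pmod 2$; then $P_1\cup P_2$ is an even $(v_1,v_2)$-path through $v_4$, while the path $v_1v_2$ and the two-edge route through $v_3$ are two further internally disjoint even $(v_1,v_2)$-paths. These three even paths form a $\varTheta^e$, so the base case applies.

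For $H_4^o$ and $H_4^e$ I expect one unified argument. Let the distinguished $4$-cycle be $v_1v_2v_3v_4$ with path-lengths $f_{12},f_{23},f_{34},f_{41}$ (all odd in $H_4^o$, all even in $H_4^e$) and let $d_{13},d_{24}$ be the two diagonal paths. In both cases the outer $4$-cycle is even; if its length is $\equiv 0\pmod 4$ we are done. Otherwise it is $\equiv 2\pmod 4$, which forces $f_{12}+f_{34}$ and $f_{23}+f_{41}$ to have different residues mod $4$ (one $\equiv 0$, the other $\equiv 2$). If some diagonal, say $d_{13}$, is even, then $v_1\to v_2\to v_3$, $v_1\to v_4\to v_3$, and $d_{13}$ are three internally disjoint even $(v_1,v_3)$-paths, a $\varTheta^e$. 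If both diagonals are odd, consider the two crossing Hamiltonian $4$-cycles $f_{12}+d_{24}+f_{34}+d_{13}$ and $d_{13}+f_{23}+d_{24}+f_{41}$; both are even, and modulo $4$ they equal $(f_{12}+f_{34})+(d_{13}+d_{24})$ and $(f_{23}+f_{41})+(d_{13}+d_{24})$, which differ, so one of them is $\equiv 0\pmod 4$.

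The necklace $N^o$ is the step I expect to carry the real content. Each adjustable path $R_i$ supplies both an even $(x_i,x_{i+1})$-path of some length $a_i$ and an odd one of some length $b_i$, and any cycle running once around the triangle has length $\ell_1+\ell_2+\ell_3$ with $\ell_i\in\{a_i,b_i\}$; since the $R_i$ are internally disjoint, the three routing choices are independent. If $a_1+a_2+a_3\equiv 0\pmod 4$ the all-even cycle works. Otherwise this sum is $\equiv 2\pmod 4$, and I would switch two legs to their odd routings: switching $R_i$ changes the total by the odd number $\delta_i=b_i-a_i$, so $\delta_i\in\{1,3\}\pmod 4$, and by pigeonhole two of $\delta_1,\delta_2,\delta_3$ agree mod $4$, whence their sum is $\equiv 2\pmod 4$ and the adjusted cycle has length $\equiv 2+2\equiv 0\pmod 4$. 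The main obstacle, and the only place needing genuine care, is matching this residue bookkeeping with honest cycles in each structure—checking that the combined path pairs really are internally disjoint and that re-routing inside one adjustable path leaves the other two untouched—after which every case collapses to the same $\{1,3\}$-pigeonhole used for $\varTheta^e$.
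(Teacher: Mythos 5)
Your proposal is correct, and for $\varTheta^e$, $H_3^e$ and $N^o$ it is essentially the paper's own argument up to cosmetic repackaging: the paper derives the $\varTheta^e$ case by summing the three congruences $|P_i|+|P_j|\equiv 2\bmod 4$ to a contradiction where you use a pigeonhole on the residues $\{0,2\}$, and for $N^o$ the paper observes that the four independent routing choices realize all four residues $\textstyle\sum a_i,\ \sum a_i+1,\ \sum a_i+2,\ \sum a_i+3 \pmod 4$ where you instead pigeonhole the odd increments $\delta_i\in\{1,3\}$; these are interchangeable. The genuine difference is in $H_4^o$ and $H_4^e$: the paper does not prove these cases at all but cites Dean, Lesniak and Saito, whereas you give a short self-contained argument, and it checks out. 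If the outer $4$-cycle is not $0\bmod 4$ then $f_{12}+f_{34}$ and $f_{23}+f_{41}$ are both even with distinct residues mod $4$; an even diagonal produces a $\varTheta^e$ on $v_1,v_3$; and if both diagonals are odd, the two Hamiltonian $4$-cycles $f_{12}+d_{24}+f_{34}+d_{13}$ and $f_{23}+d_{24}+f_{41}+d_{13}$ are both even (in either the all-odd or all-even case) and differ by $(f_{12}+f_{34})-(f_{23}+f_{41})\equiv 2\bmod 4$, so one of them is a $(0\bmod 4)$-cycle. This buys you a fully self-contained lemma at the cost of a couple of extra lines; the only nitpick is that in the $H_3^e$ case you refer to ``the path $v_1v_2$'' and ``the two-edge route through $v_3$'' as if the triangle were unsubdivided, when these are really the even paths $P_{12}$ and $P_{13}\cup P_{23}$ of the subdivision, but the parity claim you need is exactly what the definition of $H_3^e$ supplies.
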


\begin{proof}
  For $\varTheta^e$, let $P_1,P_2,P_3$ be three internally-disjoint even paths from $x$ to $y$. If $\varTheta^e$ contains no $(0\bmod 4)$-cycle, then $|P_1|+|P_2|\equiv|P_1|+|P_3|\equiv|P_2|+|P_3|\equiv 2\bmod 4$. Thus $2(|P_1|+|P_2|+|P_3|)\equiv 2\bmod 4$, a contradiction.

  For $N^o$, let $R_i$, $i=1,2,3$, be adjustable $(x_i,x_{i+1})$-paths (the subscripts are taken modulo 3) such that $R_1,R_2,R_3$ are pairwise internally-disjoint. 
  Thus $R_i$ contains an even $(x_i,x_{i+1})$-path and an odd $(x_i,x_{i+1})$-path. 
  It follows that there is an integer $a_i$ such that $R_i$ contains two $(x_i,x_{i+1})$-paths of length $a_i\bmod 4$ and of length $(a_i+1)\bmod 4$, respectively. Thus $N^o$ contains four cycles of lengths $\sum_{i=1}^3a_i, (\sum_{i=1}^3a_i+1), (\sum_{i=1}^3a_i+2), (\sum_{i=1}^3a_i+3)\bmod 4$, respectively, one of which is a $(0\bmod 4)$-cycle.

  For $H_3^e$, let $x_1,\ldots,x_4$ be the four vertices of $K_4$, and $P_{ij}$, $1\leq i<j\leq 4$, be the path corresponding to $x_ix_j$. Suppose that $P_{12},P_{13},P_{23}$ are even. Either $|P_{14}|+|P_{24}|$ or $|P_{14}|+|P_{34}|$ or $|P_{14}|+|P_{34}|$ is even. Without loss of generality we assume that $|P_{14}|+|P_{24}|$ is even. Thus $P_{12}\cup P_{13}\cup P_{23}\cup P_{14}\cup P_{24}$ is a $\varTheta^e$, which contains a $(0\bmod 4)$-cycle.

  For $H_4^o$ and $H_4^e$, the assertions were proved in \cite{dean1993cycles}.
\end{proof}

\begin{lemma}\label{LePlanar}
  Every non-planar graph contains a $(0\bmod 4)$-cycle.
\end{lemma}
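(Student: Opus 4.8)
The plan is to invoke Kuratowski's theorem: since $G$ is non-planar it contains a subdivision of $K_5$ or of $K_{3,3}$, so it suffices to prove that every subdivision of $K_5$ and of $K_{3,3}$ contains a $(0\bmod 4)$-cycle. In such a subdivision write $P_{ij}$ for the path replacing the edge between branch vertices $i$ and $j$, and record the parity of $|P_{ij}|$; every cycle of the subdivision comes from a cycle of the underlying $K_5$ (resp.\ $K_{3,3}$), with length equal to the sum of the lengths of the corresponding paths. One preliminary observation shapes the whole argument: the gadget $N^o$ is \emph{unavailable} here, since it contains three pairwise vertex-disjoint odd cycles, whereas each cycle of the subdivision meets at least three (resp.\ four) branch vertices while $K_5$ (resp.\ $K_{3,3}$) has only five (resp.\ six) vertices, so three disjoint such cycles cannot coexist. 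Thus in every case I must either produce one of $\varTheta^e$, $H_3^e$, $H_4^e$, $H_4^o$ and quote Lemma~\ref{LeThetaNH}, or argue directly modulo $4$.

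For the $K_5$-subdivision I colour the edge $ij$ of $K_5$ \emph{even} or \emph{odd} according to the parity of $P_{ij}$. If the even graph contains a triangle, then completing it with any fourth branch vertex gives an $H_3^e$; if the even graph or its complement contains a $4$-cycle, the two diagonals being present in $K_5$ yield an $H_4^e$ or $H_4^o$; and if some pair of branch vertices is joined by three internally disjoint even paths I get a $\varTheta^e$. Otherwise the even graph is triangle-free and both it and its complement are $4$-cycle-free, and a short check (any departure from a pentagon forces a $4$-cycle in the complement) shows that the only surviving colouring is the one in which both the even graph and its complement are isomorphic to $C_5$. This is the genuinely gadget-free configuration, and I handle it by summation: assuming no $(0\bmod 4)$-cycle, each of the five triangles with exactly one even edge has even length and hence length $\equiv 2\bmod 4$. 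Summing these five congruences counts every even edge once and every odd edge twice, giving $\sum_{\mathrm{even}}|P|+2\sum_{\mathrm{odd}}|P|\equiv 10\equiv 2\bmod 4$; since $2\sum_{\mathrm{odd}}|P|\equiv 2\bmod 4$ (five doubled odd numbers), we obtain $\sum_{\mathrm{even}}|P|\equiv 0\bmod 4$, so the even $5$-cycle is itself a $(0\bmod 4)$-cycle, a contradiction.

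For the $K_{3,3}$-subdivision I arrange the parities of the nine paths $P_{a_ib_j}$ into a $3\times 3$ matrix. If two rows or two columns coincide, the three paths joining the corresponding same-side branch vertices (routed through the three opposite vertices) are all even and form a $\varTheta^e$. Moreover the four vertices $a_1,a_2,b_1,b_2$ span a subdivided $K_4$, the missing diagonals being routed through $a_3$ and $b_3$, so an all-even or all-odd $2\times 2$ submatrix produces an $H_4^e$ or $H_4^o$; this applies to every choice of two rows and two columns. In the remaining patterns, where the rows and columns are pairwise distinct and no $2\times 2$ submatrix is monochromatic, I again argue modulo $4$, summing the length-congruences of a suitable family of $4$-cycles and Hamiltonian $6$-cycles to force some cycle to have length $\equiv 0\bmod 4$, in the same spirit as the pentagon argument for $K_5$.

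The main obstacle is precisely these residual, gadget-free configurations: because $N^o$ cannot be used and the $C_5$-type parity pattern of $K_5$ (together with its analogues in $K_{3,3}$) escapes all four remaining gadgets, one cannot finish by merely citing Lemma~\ref{LeThetaNH}. The crux is therefore the direct modular bookkeeping — selecting, in each surviving pattern, a family of cycles whose length-congruences sum to a contradiction — and verifying that the gadget cases and these residual patterns together exhaust all parity colourings. I expect the $K_{3,3}$ residual analysis to be the most delicate, since there the relevant even cycles (the $4$- and $6$-cycles) interact far less rigidly than the triangles and the pentagon of $K_5$.
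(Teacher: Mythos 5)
Your overall skeleton matches the paper's: reduce to subdivisions of $K_5$ and $K_{3,3}$ via Kuratowski, record the parity of each subdivided edge, and hunt for the gadgets of Lemma~\ref{LeThetaNH}. The $K_5$ half of your argument is correct and complete: the split into (even triangle) / (even $C_4$) / (odd $C_4$) / (double pentagon) is exhaustive, and your summation over the five triangles with exactly one even edge does force the even pentagon to have length $0\bmod 4$. But your framing claim there is false: the all-even $C_5$ does \emph{not} escape the gadgets. Suppressing one branch vertex of the pentagon concatenates two consecutive even paths into a single even path, turning the even $C_5$ into the even $4$-cycle of a topological $K_4$ whose diagonals are $P_{13}$ and $P_{24}$ --- an $H_4^e$. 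This path-concatenation trick is exactly how the paper disposes of every monochromatic cycle of length other than $4$, and missing it is what pushes you into ad hoc modular computations.

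The genuine gap is in the $K_{3,3}$ half. After discarding monochromatic $2\times 2$ submatrices and repeated rows/columns, you assert that the surviving parity matrices can be finished ``in the same spirit'' by summing congruences of $4$- and $6$-cycles, but you neither enumerate these matrices nor carry out a single such summation. The residual class is nonempty and heterogeneous: it contains, for instance, the pattern in which the odd edges form a perfect matching (so the even edges form a $C_6$ --- which, by the same concatenation trick you missed, is actually an $H_4^e$), and also patterns with \emph{no} monochromatic cycle at all, e.g.\ five even edges forming a tree and four odd edges forming a path. Each of these needs its own choice of cycles to sum, and you have verified none of them, so the proof of the $K_{3,3}$ case is incomplete as written. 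The paper avoids this entirely with its Claim~2: every $2$-coloured $K_{3,3}$ contains either a monochromatic cycle or a $4$-cycle made of a red path and a blue path each of length $2$; the first alternative is converted to $H_4^e$/$H_4^o$ (again by concatenating paths through suppressed branch vertices), and the second yields an $H_3^e$ on the branch vertices $x_1,y_1,y_2,x_3$. Adopting that dichotomy, or at minimum listing the surviving matrices up to symmetry and exhibiting the cycle family for each, is needed to close your argument.
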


\begin{proof}
  We show that every subdivision of $K_5$ or $K_{3,3}$ contains a $(0\bmod 4)$-cycle.

  \setcounter{claim}{0}
  \begin{claim}\label{ClColoredK5}
    An edge-colored $K_5$ with two colors contains a monochromatic cycle.
  \end{claim}

  \begin{proof}
    If a $K_5$ is colored by two colors, then at least 5 edges have the same color, which produce a monochromatic cycle.
  \end{proof}

  Let $H$ be a subdivision of $K_5$, where $x_1,\ldots,x_5$ are the five vertices of $K_5$, and let $P_{ij}$, $1\leq i<j\leq 5$, be the path of $H$ corresponding to $x_ix_j$. By Claim \ref{ClColoredK5}, there is a cycle $C$ of $K_5$ such that all edges of $C$ correspond to even paths in $H$ or correspond to odd paths in $H$.

  First suppose that all edges of $C$ correspond to even paths in $H$. If $|C|=3$, say $C=x_1x_2x_3x_1$, then $P_{12}\cup P_{23}\cup P_{13}\cup P_{14}\cup P_{24}\cup P_{34}$ is an $H_3^e$. If $|C|=4$, say $C=x_1x_2x_3x_4x_1$, then $P_{12}\cup P_{23}\cup P_{34}\cup P_{14}\cup P_{13}\cup P_{24}$ is an $H_4^e$. If $|C|=5$, say $C=x_1x_2x_3x_4x_5x_1$, then $P_{12}\cup P_{23}\cup P_{34}\cup P_{45}\cup P_{15}\cup P_{13}\cup P_{24}$ is an $H_4^e$. For each of the above cases, $H$ contains a $(0\bmod 4)$-cycle by Lemma \ref{LeThetaNH}.

  Now suppose that all edges of $C$ correspond to odd paths in $H$. If $|C|=4$, say $C=x_1x_2x_3x_4x_1$, then $P_{12}\cup P_{23}\cup P_{34}\cup P_{14}\cup P_{13}\cup P_{24}$ is an $H_4^o$, which contains a $(0\bmod 4)$-cycle.

  Assume now that $|C|=3$, say $C=x_1x_2x_3x_1$. If at least 2 edges in $\{x_1x_4,x_2x_4,x_3x_4\}$ correspond to odd paths, then there is a 4-cycle all edges of which correspond to odd paths in $H$, and we are done by the analysis above. So assume without loss of generality that $x_1x_4,x_2x_4$ correspond to even paths in $H$. It follows that $P_{13}\cup P_{23}\cup P_{14}\cup P_{24}\cup P_{15}\cup P_{25}\cup P_{45}$ is an $H_3^e$, which contains a $(0\bmod 4)$-cycle.

  Finally assume that $|C|=5$, say $C=x_1x_2x_3x_4x_5x_1$. 
  If one of the edges in $\{x_1x_3,x_2x_4,x_3x_5,x_1x_4,x_2x_5\}$ corresponds to an odd path, then there is a $4$-cycle all edges of which correspond to odd paths in $H$. If all edges in $\{x_1x_3,x_2x_4,x_3x_5,x_1x_4,x_2x_5\}$ correspond to even paths, then there is a 5-cycle all edges of which correspond to even paths in $H$. In each case we are done by the analysis above.

  \begin{claim}\label{ClColoredK33}
    An edge-colored $K_{3,3}$ with two colors, say red and blue, contains either a monochromatic cycle or a cycle consisting of a red path and a blue path both of length $2$.
  \end{claim}

  \begin{proof}
    Let $X,Y$ be the bipartite sets of the $K_{3,3}$. If at least 6 edges have the same color, then they produce a monochromatic cycle. Now assume without loss of generality that 4 edges are red and 5 edges are blue. It follows that the red edges induce a forest with exactly two components $H_1,H_2$. If one component is trivial, say $V(H_1)=\{x_1\}$ with $x_1\in X$, then there is a vertex $x_2\in X\cap V(H_2)$ that is incident to two red edges, say $x_2y_1,x_2y_2$. It follows that $x_1y_1x_2y_2x_1$ is a 4-cycle with red edges $x_2y_1,x_2y_2$ and blue edges $x_1y_1,x_1y_2$, as desired. If both $H_1,H_2$ are nontrivial, then one component contains a path of length 2, say $x_1y_1,x_1y_2\in E(H_1)$. Let $x_2\in X\cap V(H_2)$. Then $x_1y_1x_2y_2x_1$ is a 4-cycle with red edges $x_1y_1,x_1y_2$ and blue edges $x_2y_1,x_2y_2$, as desired.
  \end{proof}

  Let $H$ be a subdivision of $K_{3,3}$, where $X=\{x_1,x_2,x_3\}, Y=\{y_1,y_2,y_3\}$ be the bipartite sets of the $K_{3,3}$, and let $P_{ij}$, $1\leq i,j\leq 3$, be the path of $H$ corresponding to $x_iy_j$. By Claim \ref{ClColoredK33}, there is a cycle $C$ of $K_{3,3}$ such that either all edges of $C$ correspond to even paths in $H$ or all edges correspond to odd paths in $H$, or $C$ is a 4-cycle, and two consecutive edges of $C$ correspond to even paths in $H$ and another two consecutive edges of $C$ correspond to odd paths in $H$.

  First suppose that all edges of $C$ correspond to even paths in $H$. 
  If $|C|=4$, say $C=x_1y_1x_2y_2x_1$, then $P_{11}\cup P_{12}\cup P_{21}\cup P_{22}\cup P_{13}\cup P_{23}\cup P_{31}\cup P_{32}$ is an $H_4^e$. If $|C|=6$, say $C=x_1y_1x_2y_2x_3y_3x_1$, then $P_{11}\cup P_{21}\cup P_{22}\cup P_{32}\cup P_{33}\cup P_{13}\cup P_{12}\cup P_{23}$ is an $H_4^e$. For each case, $H$ contains a $(0\bmod 4)$-cycle.

  Now suppose that all edges of $C$ correspond to odd paths in $H$. If $|C|=4$, say $C=x_1y_1x_2y_2x_1$, then $P_{11}\cup P_{12}\cup P_{21}\cup P_{22}\cup P_{13}\cup P_{23}\cup P_{31}\cup P_{32}$ is an $H_4^o$, which contains a $(0\bmod 4)$-cycle. Now assume that $|C|=6$, say $C=x_1y_1x_2y_2x_3y_3x_1$. If one of the edges in $\{x_1y_2,x_2y_3,x_3y_1\}$ corresponds to an odd path, then there is a 4-cycle all edges of which correspond to odd paths in $H$, and we are done by the analysis above. So assume that all edges in $\{x_1y_2,x_2y_3,x_3y_1\}$ correspond to even paths. It follows that $P_{11}\cup P_{21}\cup P_{22}\cup P_{32}\cup P_{33}\cup P_{13}\cup P_{12}\cup P_{23}$ is an $H_4^e$, which contains a $(0\bmod 4)$-cycle.

  Finally suppose that $|C|=4$, say $C=x_1y_1x_2y_2x_1$, such that $P_{11},P_{12}$ are even and $P_{21},P_{22}$ are odd. It follows that $P_{11}\cup P_{12}\cup P_{21}\cup P_{22}\cup P_{13}\cup P_{31}\cup P_{32}\cup P_{33}$ is an $H_3^e$, which contains a $(0\bmod 4)$-cycle.
\end{proof}

For a path $P$ and two vertices $x,y\in V(P)$, we denote by $P[x,y]$ the subpath of $P$ with end-vertices $x$ and $y$. 
For a cycle $C$ with a given orientation and two vertices $x,y\in V(C)$, we use $C[x,y]$ (or $\overleftarrow{C}[y,x]$) to denote the path in $C$ from $x$ to $y$ along the given orientation, and $C(x,y)$ (or $P(x,y)$) is the path obtained from $C[x,y]$ (or $P[x,y]$) by removing its two end-vertices $x,y$. 

A path or adjustable path $P$ is called a \emph{bridge} of a cycle $C$ if $P$ is nontrivial, $P$ and $C$ are edge-disjoint and $V(P)\cap V(C)=\en(P)$. We remark that an adjustable bridge of $C$ contains both an even bridge and an odd bridge.
Let $P$ be a bridge of $C$, say with $\en(P)=\{x,y\}$. The \emph{span} of $P$ on $C$, denoted by $\sigma_C(P)$, is defined as $\min\{|C[x,y]|,|C[y,x]|\}$. Two bridges $P_1,P_2$ of $C$, where $\en(P_i)=\{x_i,y_i\}$, $i=1,2$, are \emph{crossed} on $C$ if $P_1,P_2$ are vertex-disjoint and $x_1,x_2,y_1,y_2$ appear in this order along $C$.

\begin{lemma}\label{LeBridge}
  Let $C$ be an even cycle and $P_i$, $i=1,2,3$, be even bridges of $C$.\\
  (1) If $P_1$ has an even span, then $C\cup P_1$ contains a $(0\bmod 4)$-cycle.\\
  (2) If $P_1,P_2$ are crossed on $C$, then $C\cup P_1\cup P_2$ contains a $(0\bmod 4)$-cycle.\\
  (3) If $P_1,P_2,P_3$ are pairwise internally-disjoint, then $C\cup P_1\cup P_2\cup P_3$ contains a $(0\bmod 4)$-cycle.
\end{lemma}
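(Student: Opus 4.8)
The plan is to handle the three statements in order, reducing each to Lemma~\ref{LeThetaNH} and reducing (2), (3) to the earlier parts. For (1): since $\sigma_C(P_1)$ is even and the two arcs of $C$ cut off by $\en(P_1)=\{x,y\}$ have lengths summing to $|C|$ (even), both arcs are even, so together with the even bridge $P_1$ they form three internally-disjoint even $(x,y)$-paths, i.e. a $\varTheta^e$, and Lemma~\ref{LeThetaNH} applies. For (2): if $P_1$ or $P_2$ has even span we are done by (1), so I may assume both spans are odd, whence both arcs of each bridge are odd. Writing $A,B,D,E$ for the four arcs cut off by $x_1,x_2,y_1,y_2$ in this cyclic order, the requirements that $|A|+|B|,\,|D|+|E|,\,|B|+|D|,\,|E|+|A|$ all be odd force $|A|\equiv|D|$ and $|B|\equiv|E|$ with $|A|\not\equiv|B|\pmod 2$. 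Now $C\cup P_1\cup P_2$ is a subdivision of $K_4$ whose two diagonals are the even paths $P_1,P_2$; according as $\{A,D\}$ or $\{B,E\}$ is the even pair, one of the two $4$-cycles of $K_4$ through both diagonals has all four edges even, exhibiting an $H_4^e$, and Lemma~\ref{LeThetaNH} finishes.

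For (3) I would first reduce as before: by (1) every $P_i$ may be assumed to have odd span (so both arcs of each $P_i$ are odd), and by (2) no two of the bridges are crossed, so the three are pairwise non-crossing. The one fact used throughout is that an even bridge of odd span forms, with either of its arcs, an odd cycle; hence its two endpoints are joined by an even path (the bridge) and an odd path (the arc), which is an adjustable path whose endpoints may moreover be slid along a free arc of $C$.

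I would then split according to the cyclic arrangement of the three pairwise non-crossing bridges. If some bridge $P$, say with $\en(P)=\{x,y\}$, \emph{separates} the other two — one reachable through each arc of $P$ — then routing from $x$ and $y$ through the enclosed bridge on each side gives two further even $(x,y)$-paths: the parity is exactly right, since an arc of $P$ with the enclosed bridge's arc removed has length $(\text{odd})-(\text{odd})$, hence even. These two routes are internally disjoint (they lie on opposite arcs of $P$) and disjoint from $P$, so $P$ together with them is a $\varTheta^e$. Otherwise the three bridges are cyclically arranged; choosing for each $P_i$ the odd cycle formed with its \emph{outward} arc and joining consecutive odd cycles by the connecting arcs yields an odd necklace $N^o$. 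In either case Lemma~\ref{LeThetaNH} produces a $(0\bmod 4)$-cycle, and the degenerate configurations in which two bridges share one or both endpoints are handled by the same two constructions (indeed two even bridges with both endpoints equal already give two of the three paths of a $\varTheta^e$).

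The reductions and parts (1), (2) are routine; the main obstacle is the configuration analysis in (3). Concretely, one must verify that three pairwise non-crossing even bridges fall, up to rotation and reflection, into the ``cyclic'', ``one-nested'', and ``fully-nested'' patterns (plus coincident-endpoint degeneracies), that a single cycle through all three bridges exists precisely in the cyclic pattern (forcing the use of $N^o$ there and of $\varTheta^e$ in the nested patterns), and that in each pattern the odd-span hypothesis makes every constructed path even while the three chosen paths stay internally disjoint. This parity bookkeeping, driven entirely by ``both arcs of every bridge are odd'', is where the real work lies.
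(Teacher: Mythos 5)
Your proposal is correct and follows essentially the same route as the paper: part (1) via $\varTheta^e$, part (2) by reducing to odd spans and exhibiting an $H_4^e$, and part (3) by reducing via (1) and (2) to three non-crossing odd-span bridges and then splitting into the ``cyclic'' configuration (yielding $N^o$) and the ``separating/nested'' configuration (yielding $\varTheta^e$), exactly as in the paper's two orderings $x_1,y_1,x_2,y_2,x_3,y_3$ and $x_1,x_2,x_3,y_3,y_2,y_1$. The only caveat is that your parenthetical on two bridges sharing both endpoints still needs the third bridge to supply the third even path (two even $(x,y)$-paths alone do not force a $(0\bmod 4)$-cycle when both arcs of $C$ are odd), but your separating construction does supply it.
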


\begin{proof}
  Suppose that $\en(P_i)=\{x_i,y_i\}$ for $i=1,2,3$.

  (1) Since $C$ is even and $\sigma_C(P_1)$ is even, both $C[x_1,y_1]$ and $C[y_1,x_1]$ are even. Thus $C\cup P_1$ is a $\varTheta^e$, which contains a $(0\bmod 4)$-cycle by Lemma \ref{LeThetaNH}.

  (2) Suppose that $x_1,x_2,y_1,y_2$ appear in this order along $C$. If $\sigma_C(P_1)$ or $\sigma_C(P_2)$ is even, then we are done by (1). Now suppose that both $\sigma_C(P_1),\sigma_C(P_2)$ are odd. Assume without loss of generality that $C[x_1,x_2]$ is even, which implies that $C[x_2,y_1]$ is odd, $C[y_1,y_2]$ is even and $C[y_2,x_1]$ is odd. Thus $C\cup P_1\cup P_2$ is an $H_4^e$, which contains a $(0\bmod 4)$-cycle.

  (3) By (1) we can assume that each of the bridges $P_1,P_2,P_3$ has an odd span. By (2) we can assume that no two of the bridges $P_1,P_2,P_3$ are crossed. First suppose that $x_1,y_1,x_2,y_2,x_3,y_3$ appear in this order along $C$ (possibly $y_1=x_2$ or $y_2=x_3$ or $y_3=x_1$). It follows that $P_i\cup C[x_i,y_i]$ is an odd cycle for $i=1,2,3$, which implies that $C\cup P_1\cup P_2\cup P_3$ is an $N^o$, and thus contains a $(0\bmod 4)$-cycle.

  Now suppose that $x_1,x_2,x_3,y_3,y_2,y_1$ appear in this order along $C$. Notice that $C[x_2,y_2]\cup P_3$ and $C[y_2,x_2]\cup P_1$ are two adjustable $(x_2,y_2)$ paths, and contain two even $(x_2,y_2)$-paths. Together with $P_2$, we obtain a $\varTheta^e$, which contains a $(0\bmod 4)$-cycle.
\end{proof}

\begin{lemma}\label{LeBridgeCrossed}
  Let $C$ be an even cycle, $P_1,P_2$ be crossed bridges of $C$, and $R$ be an adjustable path from $P_2-C$ to $C$, such that $P_1$ is even and $P_1,R$ are internally-disjoint. Then $C\cup P_1\cup P_2\cup R$ contains a $(0\bmod 4)$-cycle.
\end{lemma}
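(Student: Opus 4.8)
The plan is to use the adjustability of $R$ to manufacture an even bridge of $C$ that crosses the even bridge $P_1$, and then invoke Lemma~\ref{LeBridge}. Write $\en(P_1)=\{x_1,y_1\}$ and $\en(P_2)=\{x_2,y_2\}$ with $x_1,x_2,y_1,y_2$ in this cyclic order along $C$ (as $P_1,P_2$ are crossed), let $z$ be the end of $R$ on $P_2-C$ and $w$ its end on $C$, and split $P_2$ at $z$ into $Q_1=P_2[x_2,z]$ and $Q_2=P_2[z,y_2]$. If $P_2$ is even, then $P_1,P_2$ are crossed even bridges and Lemma~\ref{LeBridge}(2) finishes, so I may assume $P_2$ is odd. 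Because $R$ is adjustable, each of $A_1:=Q_1\cup R$ (from $x_2$ to $w$) and $A_2:=Q_2\cup R$ (from $y_2$ to $w$) contains an even bridge of $C$; moreover both are internally disjoint from $P_1$, since $Q_1,Q_2\subseteq P_2$ avoid $P_1$ (crossed bridges are vertex-disjoint) and $R$ is internally disjoint from $P_1$ by hypothesis.

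Main case: $w\notin\{x_1,y_1\}$. Then $w$ lies in one of the two open arcs of $C$ cut off by $x_1,y_1$. Since $x_2$ and $y_2$ lie in different such arcs, exactly one of $A_1,A_2$ has its two ends in different arcs and hence crosses $P_1$: it is $A_2$ when $w$ is in the arc containing $x_2$, and $A_1$ otherwise. Choosing the parity of $R$ so that this bridge is even, I obtain two crossed even bridges of the even cycle $C$, and Lemma~\ref{LeBridge}(2) produces a $(0\bmod 4)$-cycle.

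Boundary case: $w\in\{x_1,y_1\}$, say $w=x_1$ (the case $w=y_1$ is symmetric). Here $A_1,A_2$ share the endpoint $x_1$ with $P_1$ and no longer cross it, so I argue directly. First, if any of $A_1$, $A_2$, or $P_1$ has even span, then Lemma~\ref{LeBridge}(1) (applied after fixing the parity of $R$, in the case of $A_1,A_2$) already yields a $\varTheta^e$. Writing $a,b,c,d$ for the lengths of the four arcs of $C$ between consecutive vertices $x_1,x_2,y_1,y_2$, these reductions leave the parity pattern $a,d$ odd and $b,c$ even. From $x_1$ to $z$ there are two internally disjoint routes avoiding $R$: one enters $P_2$ at $x_2$ (via $Q_1$) and the other at $y_2$ (via $Q_2$), where $x_1$ reaches $x_2$ either directly along the arc of length $a$ or through $y_1$ along $P_1$ and the arc of length $b$, and similarly for $y_2$. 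Pairing a direct route to one of $x_2,y_2$ with a $P_1$-route to the other gives, in the two possible ways, either $\{\text{arc}_a\cup Q_1,\ P_1\cup\text{arc}_c\cup Q_2\}$ or $\{P_1\cup\text{arc}_b\cup Q_1,\ \text{arc}_d\cup Q_2\}$. Using the established parities together with $|P_2|=|Q_1|+|Q_2|$ odd, exactly one of these two pairs consists of two even $x_1$–$z$ paths; adjoining $R$ (taken even) to that pair yields three internally disjoint even $x_1$–$z$ paths, i.e.\ a $\varTheta^e$, which contains a $(0\bmod 4)$-cycle by Lemma~\ref{LeThetaNH}.

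The main obstacle is precisely this boundary case $w\in\{x_1,y_1\}$: there the rerouted bridges degenerate, since they meet $P_1$ at a common endpoint and cannot cross it, so one must instead incorporate $P_1$ into a rerouted even path and verify, through a short parity computation, that one of the two candidate triples of $x_1$–$z$ paths forms a $\varTheta^e$. The generic case, by contrast, reduces essentially immediately to Lemma~\ref{LeBridge}(2) once the crossing of the adjusted bridge with $P_1$ is observed.
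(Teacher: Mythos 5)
Your proof is correct and follows essentially the same route as the paper's: both first dispose of the case where $R$ lands off $\{x_1,y_1\}$ by extracting from $R$ together with half of $P_2$ an even bridge crossed with $P_1$ (Lemma~\ref{LeBridge}(2)), and then, when $R$ ends at $x_1$, both use span-parity reductions to pin down the arc parities and assemble the same $\varTheta^e$ from an arc-plus-$Q_1$ path, a $P_1$-plus-arc-plus-$Q_2$ path, and an even path inside $R$. The only cosmetic difference is that you verify explicitly that exactly one of the two symmetric pairings is even, where the paper fixes the parity of $P_2[y,x_2]$ without loss of generality.
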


\begin{center}
\begin{picture}(145,125)\label{FiBridgeCrossed}

\put(10,15){\put(50,50){\circle{100}} \qbezier(50,0)(120,-15)(125,50) \qbezier(50,100)(120,115)(125,50) \qbezier(15,15)(50,50)(85,15)
\put(50,65){\circle{20}} \put(50,33){\line(0,1){22}} \put(50,100){\line(0,-1){25}}
\put(50,0){\circle*{4}} \put(50,100){\circle*{4}} \put(15,15){\circle*{4}} \put(85,15){\circle*{4}} \put(50,33){\circle*{4}}
\put(47,105){$x_1$} \put(47,-9){$y_1$} \put(11,22){$x_2$} \put(81,22){$y_2$} \put(42,92){$x$} \put(42,35){$y$} \put(-5,70){$C$} \put(125,60){$P_1$} \put(30,17){$P_2$} \put(52,77){$R$} }

\end{picture}

\small Figure 1. Construction of Lemma \ref{LeBridgeCrossed}.
\end{center}

\begin{proof}
  Set $\en(P_i)=\{x_i,y_i\}$, $i=1,2$, $\en(R)=\{x,y\}$, such that $x_1,x_2,y_1,y_2$ appear in this order along $C$ and $x\in V(C)$, $y\in V(P_2)\backslash\{x_2,y_2\}$ (see Figure 1). If $\sigma_C(P_1)$ is even, or $P_2$ is even, then we are done by Lemma \ref{LeBridge}. So we assume that $\sigma_C(P_1)$ is odd and $P_2$ is odd. We claim that $x=x_1$ or $y_1$. Suppose otherwise and without loss of generality that $x\in V(C(x_1,y_1))$. It follows that $R\cup P_2[y,y_2]$ is an adjustable bridge of $C$ that is crossed with $P_1$. By Lemma \ref{LeBridge}, $C\cup P_1\cup R\cup P_2[y,y_2]$ contains a $(0\bmod 4)$-cycle. Thus we conclude without loss of generality that $x=x_1$.

  If $C[x_1,x_2]$ is even, then $R\cup P_2[y,x_2]$ is an adjustable bridge of $C$ with an even span. By Lemma~\ref{LeBridge}, $C\cup R\cup P_2[y,x_2]$ contains a $(0\bmod 4)$-cycle. So we assume that $C[x_1,x_2]$ is odd, and similarly, $C[y_2,x_1]$ is odd, from which it follows that $C[x_2,y_1]$ and $C[y_1,y_2]$ are even. Recall that $P_2$ is odd, implying that either $P_2[y,x_2]$ or $P_2[y,y_2]$ is odd. Without loss of generality we assume that $P_2[y,x_2]$ is odd. Then $C[x_1,x_2]x_2P_2y$ and $P_1y_1C[y_1,y_2]y_2P_2[y_2,y]$ are two even $(x,y)$-paths, and together with an even $(x,y)$-path in $R$ we obtain a $\varTheta^e$, which contains a $(0\bmod 4)$-cycle, as desired.
\end{proof}

\begin{lemma}\label{LeBridgeAdjustablePath}
  Let $C$ be an even cycle, $P_1,P_2$ be two vertex-disjoint bridges of $C$ with even spans, and $R$ be an adjustable path from $P_1-C$ to $P_2-C$, such that $C$ and $R$ are vertex-disjoint. Then $C\cup P_1\cup P_2\cup R$ contains a $(0\bmod 4)$-cycle.
\end{lemma}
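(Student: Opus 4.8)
The plan is to peel off the easy configurations with Lemma~\ref{LeBridge} and then reduce everything to exhibiting one of the gadgets of Lemma~\ref{LeThetaNH} (a $\varTheta^e$ or an odd necklace $N^o$). Write $\en(P_i)=\{x_i,y_i\}$, and let $u\in V(P_1)\setminus V(C)$, $v\in V(P_2)\setminus V(C)$ be the ends of $R$, so $u,v$ are internal vertices of $P_1,P_2$. My first move is a reduction: if $P_1$ is an even bridge, or an adjustable bridge, then it contains an even bridge with the same (even) span, and this even bridge together with $C$ is a $\varTheta^e$ by Lemma~\ref{LeBridge}(1); hence I may assume that $P_1$ and, symmetrically, $P_2$ are \emph{odd} paths. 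This reduction is the crucial one, because when $P_i$ is odd with even span, $Z_i:=P_i\cup(\text{an even }x_i\text{--}y_i\text{ arc of }C)$ is an odd cycle passing through $x_i$, $y_i$ and the internal vertex ($u$ or $v$); these three odd cycles ($Z_1,Z_2$, and the odd cycle inside $R$) are exactly what later make a necklace available.

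Next I branch on the parity of a $C$-arc joining a foot of $P_1$ to a foot of $P_2$. If $C[x_1,x_2]$ is even, then $Q:=P_1[x_1,u]\cup R\cup P_2[v,x_2]$ is an adjustable bridge of $C$ from $x_1$ to $x_2$ with even span; its even sub-bridge together with $C$ is a $\varTheta^e$, and Lemma~\ref{LeBridge}(1) finishes. So I may assume every such $P_1$-foot-to-$P_2$-foot arc is odd. I then split according to whether $P_1,P_2$ are crossed on $C$.

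If $P_1,P_2$ are crossed, I route two internally disjoint $(u,v)$-paths through $C$ using one of the two non-crossing matchings $\{x_1x_2\},\{y_1y_2\}$ or $\{x_1y_2\},\{y_1x_2\}$ of the feet: each such path is a $P_1$-leg, an odd $C$-arc, and a $P_2$-leg. Since $|P_2|$ is odd, the two feet of $P_2$ have legs of opposite parity, so exactly one of the two matchings makes both $C$-routes even; together with an even $(u,v)$-path drawn inside $R$ (which is internally disjoint from $C\cup P_1\cup P_2$), this gives three internally disjoint even $(u,v)$-paths, i.e.\ a $\varTheta^e$, and Lemma~\ref{LeThetaNH} applies.

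The remaining and hardest case is when $P_1,P_2$ are not crossed; here I build an odd necklace $N^o$ on the triangle $\{u,v,x_1\}$. I take $R_1=R$ (adjustable $(u,v)$), $R_2=Z_1$ (an odd cycle through $u$ and $x_1$, hence an adjustable $(u,x_1)$-path), and for $R_3$ the arc of $C$ leaving $x_1$ along the side not used by $Z_1$ until it first meets a foot $t\in\{x_2,y_2\}$ of $P_2$, followed by $Z_2$ (an adjustable $(t,v)$-, hence $(x_1,v)$-, path). Because $P_1,P_2$ are uncrossed, each $x_i$--$y_i$ even span arc can be chosen free of the opposite pair of feet, so $Z_1$ and $Z_2$ are vertex-disjoint and the connecting arc meets $Z_1$ only at $x_1$ and $Z_2$ only at $t$; thus $R_1,R_2,R_3$ are pairwise internally disjoint and form an $N^o$, which contains a $(0\bmod 4)$-cycle by Lemma~\ref{LeThetaNH}. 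The step I expect to demand the most care is exactly this routing: checking, in each uncrossed cyclic arrangement of $x_1,y_1,x_2,y_2$, that the two span arcs and the connecting arc can be made pairwise internally disjoint on the congested cycle $C$. This is precisely where the uncrossed hypothesis enters (an $x_i$--$y_i$ arc avoiding the other two feet exists only then), which is also why the crossed case genuinely needs the separate $\varTheta^e$ argument above.
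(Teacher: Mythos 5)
Your proof is correct, and for most of the lemma it mirrors the paper's argument: you dispose of even or adjustable $P_i$ via Lemma~\ref{LeBridge}(1), and in the uncrossed case you assemble an odd necklace $N^o$ from the two odd cycles $P_i\cup(\text{even }x_i\text{--}y_i\text{ arc})$, a connecting arc of $C$, and $R$ --- essentially the same necklace the paper builds (the paper anchors its triangle at $x,x_2,y$ rather than at $u,x_1,v$, but the construction is the same). The genuine divergence is the crossed case. The paper simply observes that $C\cup P_1\cup P_2\cup R$ contains a subdivision of $K_{3,3}$ and invokes Lemma~\ref{LePlanar}, with no parity analysis at all. You instead first add a reduction (if some foot-of-$P_1$-to-foot-of-$P_2$ arc were even, the adjustable bridge $P_1[x_1,u]\cup R\cup P_2[v,x_2]$ would have even span, so all four such arcs are odd --- note they automatically share a parity since the spans of $P_1,P_2$ and the length of $C$ are even), and then run a clean matching/parity argument: exactly one of the two non-crossing pairings of the feet yields two internally disjoint even $(u,v)$-routes through $C$, which together with an even $(u,v)$-path inside $R$ form a $\varTheta^e$. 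I checked the parity bookkeeping ($(a+c)+(a+d)$ is odd, so exactly one matching works) and the disjointness of the two $C$-arcs in the crossed arrangement; both are fine. Your route buys a self-contained, elementary argument that never touches the $K_{3,3}$ machinery, at the cost of the extra arc-parity reduction; the paper's route is shorter given that Lemma~\ref{LePlanar} is already in hand.
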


\begin{center}
\begin{picture}(150,125)\label{FiBridgeAdjustablePath}

\put(10,15){\put(50,50){\circle{100}} \qbezier(50,0)(120,-10)(93,25) \qbezier(50,100)(120,110)(93,75) \qbezier(100,7)(115,20)(120,40) \qbezier(100,93)(115,80)(120,60) \put(120,50){\circle{20}}
\put(50,0){\circle*{4}} \put(50,100){\circle*{4}} \put(93,25){\circle*{4}} \put(93,75){\circle*{4}} \put(100,7){\circle*{4}} \put(100,93){\circle*{4}}
\put(47,105){$y_1$} \put(80,73){$x_1$} \put(80,23){$y_2$} \put(47,-9){$x_2$} \put(103,92){$x$} \put(103,4){$y$} \put(-5,70){$C$} \put(75,104){$P_1$} \put(75,-11){$P_2$} \put(120,62){$R$} }

\end{picture}

\small Figure 2. Construction of Lemma \ref{LeBridgeAdjustablePath}.
\end{center}

\begin{proof}
  Set $\en(P_i)=\{x_i,y_i\}$, $i=1,2$, $\en(R)=\{x,y\}$, where $x\in V(P_1)\backslash\{x_1,y_1\}$, $y\in V(P_2)\backslash\{x_2,y_2\}$ (see Figure 2). 
  If $P_1,P_2$ are crossed on $C$, then $C\cup P_1\cup P_2\cup R$ contains a subdivision of $K_{3,3}$, and thus contains a $(0\bmod 4)$-cycle by Lemma~\ref{LePlanar}. 
  So we assume without loss of generality that $x_1,y_1,x_2,y_2$ appear in this order along $C$. If $P_1$ or $P_2$ is even, then we are done by Lemma \ref{LeBridge}. So we assume that both $P_1$ and $P_2$ are odd. It follows that $P_1\cup C[x_1,x_2]$ is an adjustable $(x,x_2)$-path and $P_2\cup C[x_2,y_2]$ is an adjustable $(x_2,y)$-path. Together with $R$, we get a $N^o$, which contains a $(0\bmod 4)$-cycle.
\end{proof}

\begin{lemma}\label{LeTwoCycleBrige}
  Let $C_1,C_2$ be odd cycles with $|C_1|\equiv|C_2|\bmod 4$, and $P_1,P_2,P_3$ be vertex-disjoint paths from $C_1$ to $C_2$.\\
  (1) If $C_1,C_2$ are vertex-disjoint, and $|P_1|+|P_2|$ even, then $C_1\cup C_2\cup P_1\cup P_2$ contains a $(0\bmod 4)$-cycle.\\
  (2) If $V(C_1)\cap V(C_2)=\{x\}$, $P_1$ is even and $x\notin V(P_1)$, then $C_1\cup C_2\cup P_1$ contains a $(0\bmod 4)$-cycle.\\
  (3) If $C_1,C_2$ are vertex-disjoint, then $C_1\cup C_2\cup P_1\cup P_2\cup P_3$ contains a $(0\bmod 4)$-cycle.
\end{lemma}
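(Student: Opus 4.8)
The plan rests on two elementary ingredients. First, since an odd cycle $C$ has odd length, the two arcs of $C$ between any two of its vertices $a,b$ have lengths summing to the odd number $|C|$, so exactly one arc is even and one is odd; thus $C$ provides both an even and an odd $(a,b)$-path. Second, a purely arithmetic fact: if two even integers sum to $2\bmod 4$, then one of them is $\equiv 0\bmod 4$ (an even integer is $0$ or $2$ mod $4$, and $0+0\equiv 2+2\equiv 0$ while $0+2\equiv 2$). The whole lemma reduces to combining these.

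For (1), write $\en(P_1)=\{u_1,v_1\}$ and $\en(P_2)=\{u_2,v_2\}$ with $u_i\in V(C_1)$ and $v_i\in V(C_2)$; since $P_1,P_2$ are vertex-disjoint we have $u_1\neq u_2$ and $v_1\neq v_2$. Every cycle of $C_1\cup C_2\cup P_1\cup P_2$ through both bridges consists of an arc of $C_1$ from $u_1$ to $u_2$, the bridge $P_2$, an arc of $C_2$ from $v_2$ to $v_1$, and the bridge $P_1$, giving four cycles, one per pair of arc choices. Using the first ingredient I would select the two arcs to have the same parity: since $|P_1|+|P_2|$ is even, this produces exactly the two cycles of even total length (an even-arc/even-arc cycle and an odd-arc/odd-arc cycle). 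I would then observe that the lengths of these two even cycles sum to $|C_1|+|C_2|+2(|P_1|+|P_2|)$. Because $C_1,C_2$ are odd with $|C_1|\equiv|C_2|\bmod 4$, their common residue is $1$ or $3$, so $|C_1|+|C_2|\equiv 2\bmod 4$, while $2(|P_1|+|P_2|)\equiv 0\bmod 4$ as $|P_1|+|P_2|$ is even. Hence the two even cycle-lengths sum to $2\bmod 4$, and by the second ingredient one of them is $\equiv 0\bmod 4$.

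Part (2) is the same argument with the shared vertex $x$ serving as the common endpoint. Writing $\en(P_1)=\{u,v\}$, the hypothesis $x\notin V(P_1)$ forces $u,v\neq x$, so $C_1$ supplies both parities of an $(x,u)$-path and $C_2$ both parities of a $(v,x)$-path, all avoiding $x$ except at $x$ itself; splicing a matched-parity pair with the even bridge $P_1$ again yields two even cycles whose lengths sum to $|C_1|+|C_2|+2|P_1|\equiv 2\bmod 4$, so one is $\equiv 0\bmod 4$. For (3), among the three integers $|P_1|,|P_2|,|P_3|$ two share a parity, so some pair $P_i,P_j$ has $|P_i|+|P_j|$ even; as $C_1,C_2$ are vertex-disjoint, applying part (1) to $P_i,P_j$ finishes the proof.

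The only genuine work is the mod-$4$ bookkeeping in (1): one must verify that the two equal-parity-arc cycles are precisely the even ones and that their length-sum collapses to $2\bmod 4$, which is exactly where both hypotheses $|C_1|\equiv|C_2|\bmod 4$ and $|P_1|+|P_2|$ even are consumed. Parts (2) and (3) then follow with no new ideas, the former by repeating the computation and the latter by the one-line parity pigeonhole reducing to (1).
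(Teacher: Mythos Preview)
Your proof is correct and follows essentially the same route as the paper: in (1) you build the two even cycles from matched-parity arcs plus the bridges, note their lengths sum to $|C_1|+|C_2|+2(|P_1|+|P_2|)\equiv 2\bmod 4$, and conclude one is $0\bmod 4$; the paper phrases this as a contradiction (if both were $2\bmod 4$ the sum would be $0\bmod 4$), but the content is identical. Parts (2) and (3) likewise match the paper's proof---(2) as the degenerate case with $x$ playing the role of one bridge-endpoint pair, and (3) by the parity pigeonhole reducing to (1).
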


\begin{proof}
  Suppose that $\en(P_i)=\{x_i,y_i\}$, where $x_i\in V(C_1),y_i\in V(C_2)$ for $i=1,2,3$.

  (1) Notice that $C_1$ contains two paths from $x_1$ to $x_2$, one of which is even and the other is odd. Let $P_1^e$ and $P_1^o$, respectively, be the even and odd $(x_1,x_2)$-paths of $C_1$, and similarly let $P_2^e$ and $P_2^o$, respectively, be the even and odd $(y_1,y_2)$-paths of $C_2$. It follows that $P_1\cup P_2\cup P_1^e\cup P_2^e$ and $P_1\cup P_2\cup P_1^o\cup P_2^o$ are two even cycles. If they are not $(0\bmod 4)$-cycles, then both of them have length $2\bmod 4$. This implies that $|C_1|+|C_2|+2(|P_1|+|P_2|)\equiv 0\bmod 4$, and then $|C_1|+|C_2|\equiv 0\bmod 4$, a contradiction.

  (2) This is a degenerate case of (1), and the proof is identical to (1).

  (3) Either $|P_1|+|P_2|$, or $|P_1|+|P_3|$, or $|P_2|+|P_3|$ is even, and the assertion can be deduced from~(1).
\end{proof}

\begin{lemma}\label{LeThreeCycleBridge}
  Let $C_1,C_2,C_3$ be three odd cycles with $|C_1|\equiv|C_2|\equiv|C_3|\bmod 4$ such that they pairwise intersect at a vertex $x$. Let $P_i$ be a path from $C_i$ to $C_{i+1}$ that is vertex-disjoint with $C_{i+2}$, $i=1,2,3$ (the subscripts are taken modulo~$3$), such that $P_1,P_2,P_3$ are pairwise internally-disjoint. Then $C_1\cup C_2\cup C_3\cup P_1\cup P_2\cup P_3$ contains a $(0\bmod 4)$-cycle.
\end{lemma}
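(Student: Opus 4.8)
The plan is to reduce to the two-cycle Lemma~\ref{LeTwoCycleBrige} whenever possible, and otherwise to manufacture a single even cycle carrying an even bridge of even span so that Lemma~\ref{LeBridge}(1) finishes the job. Write $\en(P_i)=\{a_i,b_i\}$ with $a_i\in V(C_i)$ and $b_i\in V(C_{i+1})$ (indices mod $3$). The first thing I would record is that the hypotheses force $x\notin V(P_i)$ for every $i$: since $x\in V(C_{i+2})$ and $P_i$ is vertex-disjoint from $C_{i+2}$, no vertex of $P_i$ can equal $x$; in particular each $P_i$ is nontrivial and $a_i,b_i\neq x$. This kills every degeneracy in which a path touches the common vertex, and it is exactly the hypothesis needed to invoke Lemma~\ref{LeTwoCycleBrige}(2). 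Indeed, if some $P_i$ is \emph{even}, then $C_i\cup C_{i+1}\cup P_i$ already contains a $(0\bmod 4)$-cycle by Lemma~\ref{LeTwoCycleBrige}(2), using $V(C_i)\cap V(C_{i+1})=\{x\}$ and $x\notin V(P_i)$. So I may assume that every $P_i$ is odd.

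For the second reduction, on each $C_i$ let $w_i$ be the arc between $b_{i-1}$ and $a_i$ that avoids $x$; together with the two arcs joining $x$ to $a_i$ and to $b_{i-1}$ it partitions $C_i$. If some $w_i$ is even, I would splice $P_{i-1}$, this arc, and $P_i$ into a single path from $C_{i-1}$ to $C_{i+1}$. Its length $|P_{i-1}|+w_i+|P_i|$ is even (a sum of two odd numbers and one even number), and it avoids $x$ because the arc does and the paths do; one checks that all of its internal vertices lie off $C_{i-1}\cup C_{i+1}$ (here the disjointness of $P_{i-1}$ from $C_{i+1}$ and of $P_i$ from $C_{i-1}$ is used). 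Since $C_{i-1}$ and $C_{i+1}$ meet only in $x$, Lemma~\ref{LeTwoCycleBrige}(2) again yields a $(0\bmod 4)$-cycle. Thus I may assume in addition that every $w_i$ is odd.

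It remains to treat the case in which all $P_i$ and all $w_i$ are odd. Here I would assemble the large cycle $D:=w_1\cup P_1\cup w_2\cup P_2\cup w_3\cup P_3$, a genuine cycle avoiding $x$ (the three arcs lie on distinct cycles meeting only at the excluded vertex $x$, and the $P_i$ are internally disjoint from everything), of even length $|P_1|+|P_2|+|P_3|+w_1+w_2+w_3$ (a sum of six odd numbers). Let $p_i$ be the arc of $C_i$ from $a_i$ to $x$ not through $b_{i-1}$, and set $\epsilon_i=|p_i|\bmod 2$. For each $i$ the concatenation $p_i\cup p_{i+1}$ is a bridge of $D$ joining $a_i$ and $a_{i+1}$ through $x$, and its span on $D$ is even: one of the two $D$-arcs between $a_i$ and $a_{i+1}$ is $P_i\cup w_{i+1}$, of even length $|P_i|+w_{i+1}$, and since $D$ is even the complementary arc is even as well. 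By pigeonhole two of $\epsilon_1,\epsilon_2,\epsilon_3$ agree, say $\epsilon_i=\epsilon_{i+1}$; then $p_i\cup p_{i+1}$ is an \emph{even} bridge of even span, and Lemma~\ref{LeBridge}(1) applied to $D$ and this bridge produces a $(0\bmod 4)$-cycle.

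The arithmetic is routine parity bookkeeping; the step demanding the most care is verifying that $D$ is a simple cycle and that each $p_i\cup p_{i+1}$ is a legitimate bridge of $D$ (internally disjoint from $D$, edge-disjoint, nontrivial). Both rest on the two structural facts secured earlier: that $x\notin V(P_i)$, so the $x$-avoiding arcs $w_i$ are pairwise disjoint and meet the paths only at the shared endpoints $a_i,b_i$; and that every $w_i$ is odd, hence nonzero, so no two attachment points $a_i,b_{i-1}$ collide. Granting these, $p_i$ meets $D$ only in $a_i$ and $p_i\cup p_{i+1}$ meets $D$ exactly in $\{a_i,a_{i+1}\}$, as a bridge must. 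I expect this disjointness verification, rather than any computation, to be the main obstacle to a fully rigorous write-up.
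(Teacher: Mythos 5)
Your proof is correct. The first two reductions coincide with the paper's: if some $P_i$ is even you invoke Lemma~\ref{LeTwoCycleBrige}(2) directly, and if some arc $w_i$ (the paper's $C_i[z_i,y_i]$, with $z_i=b_{i-1}$, $y_i=a_i$) is even you splice $P_{i-1}$, $w_i$, $P_i$ into an even path between the other two cycles and invoke Lemma~\ref{LeTwoCycleBrige}(2) again. Only the endgame differs. When all six pieces are odd, the paper takes the three inner arcs $C_i[x,z_i]$ together with the three even concatenations $C_i[z_i,y_i]\cup P_i$ and recognizes an $H_3^e$ with branch vertices $x,z_1,z_2,z_3$, then quotes Lemma~\ref{LeThetaNH}; you instead close the six odd pieces into a single even cycle $D$, observe that each $p_i\cup p_{i+1}$ is a bridge through $x$ of even span, pick a pair with matching parities by pigeonhole, and quote Lemma~\ref{LeBridge}(1). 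Both final configurations sit inside the same union and both ultimately reduce to a $\varTheta^e$, so neither gains much over the other: yours trades the recognition of a $K_4$-subdivision for a pigeonhole step plus a span computation. The disjointness issues you flag at the end all resolve exactly as you indicate, from $x\notin V(P_i)$, from the internal vertices of each $P_j$ lying off all three cycles, and from the nontriviality of the $w_i$ in the final case.
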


\begin{center}
\begin{picture}(120,120)\label{FiThreeCycleBridge}

\put(10,10){\qbezier(50,50)(100,79)(100,50) \qbezier(50,50)(100,21)(100,50) \qbezier(50,50)(50,108)(25,93) \qbezier(50,50)(0,79)(25,93) \qbezier(50,50)(50,-8)(25,7) \qbezier(50,50)(0,21)(25,7) \qbezier(88,65)(75,93)(44,90) \qbezier(88,35)(75,7)(44,10) \qbezier(18,76)(0,50)(18,24)
\put(50,50){\circle*{4}} \put(88,65){\circle*{4}} \put(88,35){\circle*{4}} \put(44,90){\circle*{4}} \put(18,76){\circle*{4}} \put(44,10){\circle*{4}} \put(18,24){\circle*{4}}
\put(51,55){$x$} \put(87,70){$y_1$} \put(43,94){$z_2$} \put(6,75){$y_2$} \put(6,23){$z_3$} \put(43,2){$y_3$} \put(87,27){$z_1$} \put(87,45){$C_1$} \put(22,84){$C_2$} \put(22,10){$C_3$} \put(72,85){$P_1$} \put(-4,45){$P_2$} \put(72,9){$P_3$} }

\end{picture}

\small Figure 3. Construction of Lemma \ref{LeThreeCycleBridge}.
\end{center}

\begin{proof}
  Set $\en(P_i)=\{y_i,z_{i+1}\}$, $i=1,2,3$, where $y_i,z_i\in V(C_i)\backslash\{x\}$. We suppose that $x,z_i,y_i$ appear in this order along $C_i$ (see Figure 3). If one of $P_1,P_2,P_3$ is even, then we are done by Lemma \ref{LeTwoCycleBrige}. So we assume that all of $P_1,P_2,P_3$ are odd. If $C_i[z_i,y_i]$ is even (including the case $z_i=y_i$), then $P_{i-1}z_iC_i[z_i,y_i]y_iP_i$ is an even path from $C_{i-1}-x$ to $C_{i+1}-x$, and we are done by Lemma \ref{LeTwoCycleBrige}. So we assume that $C[z_i,y_i]$ is odd for $i=1,2,3$. Now $C_1[x,y_1]\cup C_2[x,y_2]\cup C_3[x,y_3]\cup P_1\cup P_2\cup P_3$ is an $H_3^e$, which contains a $(0\bmod 4)$-cycle.
\end{proof}

\begin{lemma}\label{LeThreeCyclePath}
  Let $C_1,C_2,C_3$ be three odd cycles with $|C_1|\equiv|C_2|\equiv|C_3|\bmod 4$ such that they pairwise intersect at a vertex $x$. Let $P_i$ be a path from a vertex $y$ to $C_i-x$, $i=1,2,3$, where $y\notin V(C_1)\cup V(C_2)\cup V(C_3)$, such that $P_1,P_2,P_3$ are internally-disjoint with $C_1,C_2,C_3$ and are pairwise internally-disjoint. Then $C_1\cup C_2\cup C_3\cup P_1\cup P_2\cup P_3$ contains a $(0\bmod 4)$-cycle.
\end{lemma}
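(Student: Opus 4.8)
The plan is to reduce this three-spoke configuration to the two-cycle situation already handled in Lemma~\ref{LeTwoCycleBrige}(2), exploiting the fact that all three paths share the common endpoint $y$. Write $\en(P_i)=\{y,w_i\}$ with $w_i\in V(C_i)\setminus\{x\}$. The key observation is that for any two indices $i\neq j$, the concatenation $P_i\cup P_j$ is a single path from $w_i\in C_i$ to $w_j\in C_j$ passing through the hub $y$, so it can serve as a bridge between the two odd cycles $C_i$ and $C_j$.

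First I would verify that this concatenation is a genuine bridge from $C_i$ to $C_j$ in the sense required by Lemma~\ref{LeTwoCycleBrige}(2). Since $P_i,P_j$ are pairwise internally-disjoint and meet only in $y$, their union is a simple path. Its internal vertices consist of $y$ together with the internal vertices of $P_i$ and $P_j$; all of these lie off $C_1\cup C_2\cup C_3$ by hypothesis, hence in particular off $C_i\cup C_j$. Moreover $x$ is not among them: $y\notin V(C_i)$, the internal vertices avoid all three cycles, and $w_i,w_j\neq x$; thus $x\notin V(P_i\cup P_j)$. So $P_i\cup P_j$ is a bridge from $C_i$ to $C_j$ avoiding $x$, of length $|P_i|+|P_j|$.

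The main step is then a pigeonhole argument on parities. Among the three paths $P_1,P_2,P_3$, two must have the same parity, say $P_i$ and $P_j$; for this pair $|P_i|+|P_j|$ is even, so $P_i\cup P_j$ is an even bridge. Since $C_i,C_j$ are odd with $|C_i|\equiv|C_j|\bmod 4$ and intersect only in $x\notin V(P_i\cup P_j)$, Lemma~\ref{LeTwoCycleBrige}(2) applies to $C_i\cup C_j\cup(P_i\cup P_j)$ and produces a $(0\bmod 4)$-cycle, which sits inside $C_1\cup C_2\cup C_3\cup P_1\cup P_2\cup P_3$.

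There is essentially no hard obstacle here: the hub structure collapses the three-spoke configuration into an ordinary two-cycle bridge as soon as any two spokes are selected, and pigeonhole on parities always supplies a same-parity pair. This is exactly what makes the present lemma easier than Lemma~\ref{LeThreeCycleBridge}, where the connecting paths join distinct pairs of cycles and no single pigeonhole step is available. The only point requiring care is the routine check that the merged path $P_i\cup P_j$ meets the mild internal-disjointness and $x$-avoidance hypotheses of Lemma~\ref{LeTwoCycleBrige}(2).
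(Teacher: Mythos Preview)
Your proposal is correct and follows essentially the same approach as the paper: pigeonhole on the parities of $|P_1|,|P_2|,|P_3|$ to find a pair whose concatenation through $y$ is an even path from $C_i-x$ to $C_j-x$, then invoke Lemma~\ref{LeTwoCycleBrige}(2). The paper's proof is the same argument in two sentences; your extra verification that $P_i\cup P_j$ really is a simple path avoiding $x$ is routine but not unwelcome.
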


\begin{center}
\begin{picture}(120,140)\label{FiThreeCyclePath}

\put(0,10){\qbezier(50,50)(100,79)(100,50) \qbezier(50,50)(100,21)(100,50) \qbezier(50,50)(50,108)(25,93) \qbezier(50,50)(0,79)(25,93) \qbezier(50,50)(50,-8)(25,7) \qbezier(50,50)(0,21)(25,7) \put(120,50){\line(-1,0){20}} \qbezier(120,50)(100,100)(44,90) \qbezier(120,50)(100,0)(44,10)
\put(50,50){\circle*{4}} \put(120,50){\circle*{4}} \put(100,50){\circle*{4}} \put(44,90){\circle*{4}} \put(44,10){\circle*{4}}
\put(51,55){$x$} \put(122,47){$y$} \put(100,42){$z_1$} \put(43,94){$z_2$} \put(43,2){$z_3$} \put(75,54){$C_1$} \put(22,84){$C_2$} \put(22,10){$C_3$} \put(103,52){$P_1$} \put(100,80){$P_2$} \put(100,13){$P_3$} }

\end{picture}

\small Figure 4. Construction of Lemma \ref{LeThreeCyclePath}.
\end{center}

\begin{proof}
  Set $\en(P_i)=\{y,z_i\}$, where $z_i\in V(C_i)\backslash\{x\}$ (see Figure 4). Notice that either $|P_1|+|P_2|$, or $|P_1|+|P_3|$, or $|P_2|+|P_3|$ is even. Assume without loss of generality that $|P_1|+|P_2|$ is even. Then $P_1yP_2$ is an even path from $C_1-x$ to $C_2-x$. By Lemma \ref{LeTwoCycleBrige}, $C_1\cup C_2\cup P_1\cup P_2$ contains a $(0\bmod 4)$-cycle.
\end{proof}

\begin{lemma}\label{LeBipartite}
  If $G$ is a bipartite graph of order $n\geq 4$ containing no $(0\bmod 4)$-cycle, then $e(G)\leq\lfloor\frac{3}{2}(n-2)\rfloor$.
\end{lemma}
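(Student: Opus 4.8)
The plan is to extract only two properties of $G$ and then reduce everything to a single application of Euler's formula. First I would record that any bipartite graph with no $(0\bmod 4)$-cycle has girth at least $6$: being bipartite it has no odd cycle, and since $4\equiv 0\bmod 4$ it has no $4$-cycle, so its shortest cycle has length $\ge 6$. Second, by Lemma~\ref{LePlanar} the graph is planar (a non-planar graph would contain a $(0\bmod 4)$-cycle). These are the only features I expect to use; notably, the elaborate bridge lemmas preceding the statement are aimed at the main theorem and are not needed here.

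Next I would prove the bound for a single nontrivial block. Let $B$ be a $2$-connected bipartite planar graph of girth at least $6$ on $m$ vertices; since $B$ contains a cycle, which has length $\ge 6$, we have $m\ge 6$. Fix a plane embedding. Because $B$ is $2$-connected, every face is bounded by a cycle, hence by at least $6$ edges, and each edge lies on exactly two faces; writing $f$ for the number of faces this gives $2e(B)=\sum_{\text{faces}}(\text{length})\ge 6f$, so $f\le e(B)/3$. Substituting into Euler's formula $m-e(B)+f=2$ yields $2\le m-e(B)+e(B)/3=m-\tfrac{2}{3}e(B)$, that is, $e(B)\le\frac32(m-2)$.

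It then remains to assemble the global bound, for which I would use a short block/component bookkeeping. For a connected $G$ the block--cut-tree identity gives $n-1=\sum_{\text{blocks}}(|B|-1)$ and $e(G)=\sum_{\text{blocks}}e(B)$, where each bridge contributes $(e,|B|-1)=(1,1)$ and each $2$-connected block satisfies the inequality above. Summing, one gets $e(G)\le\frac32(n-1)-\frac12\beta-\frac32 s$, with $\beta$ the number of bridges and $s$ the number of $2$-connected blocks. Since either $s\ge 1$, or $G$ is a tree with $\beta=n-1\ge 3$ (this is exactly where the hypothesis $n\ge 4$ is used), we have $\beta+3s\ge 3$ and hence $e(G)\le\frac32(n-2)$. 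For disconnected $G$ I would first note that every connected bipartite $(0\bmod 4)$-free graph on $\nu$ vertices has at most $\frac32(\nu-1)$ edges (immediate from the connected case, with the trivial bounds for $\nu\le 3$), and then sum over the $c\ge 2$ components to get $e(G)\le\frac32 n-\frac32 c\le\frac32(n-2)$.

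Finally, since $e(G)$ is an integer, the inequality $e(G)\le\frac32(n-2)$ automatically sharpens to $e(G)\le\lfloor\frac32(n-2)\rfloor$, as required. With the Euler-formula approach I do not anticipate a serious obstacle: the only points requiring care are the block/component bookkeeping and the handling of small-order components, both routine once $n\ge 4$ is invoked. The one conceptual step worth isolating is recognizing that planarity (Lemma~\ref{LePlanar}) together with girth $6$ reduces the whole lemma to the classical face-counting bound, rather than to any analysis of bridges.
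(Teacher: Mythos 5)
Your proof is correct and rests on the same two pillars as the paper's: planarity via Lemma~\ref{LePlanar} and Euler's formula applied to the $2$-connected pieces, where bipartiteness plus the absence of $4$-cycles forces every face to have length at least~$6$. The only difference is organizational --- you sum explicitly over the block--cut tree and over components, whereas the paper reaches the $2$-connected case by induction on $n$ (peeling off degree-$\le 1$ vertices and splitting at cut vertices) --- so the arguments are essentially the same.
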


\begin{proof}
  We use induction on $n$. The assertion is trivial if $n=4$. Assume now that $n\geq 5$. If $G$ has a vertex $x$ with $d(x)\leq 1$, then by induction hypothesis, $e(G-x)\leq\lfloor\frac{3}{2}(n-3)\rfloor$, and $e(G)\leq e(G-x)+1\leq\lfloor\frac{3}{2}(n-2)\rfloor$. So assume that every vertex of $G$ has degree at least 2. If $G$ is not 2-connected, then $G$ is the union of two nontrivial graphs $G_1,G_2$ of order $n_1,n_2$, respectively, where $n_1+n_2=n+1$. If $n_i\leq 3$, then $G_i$ contains a vertex of degree at most 1 in $G$, a contradiction. So we assume that both $n_1,n_2\geq 4$. By the induction hypothesis, $e(G_i)\leq\lfloor\frac{3}{2}(n_i-2)\rfloor$, and thus $e(G)=e(G_1)+e(G_2)\leq\lfloor\frac{3}{2}(n-2)\rfloor$. So we conclude that $G$ is $2$-connected.

  By Lemma~\ref{LePlanar}, $G$ is planar. Since $G$ is bipartite and contains no $(0\bmod 4)$-cycle, every face is bounded by a cycle of length at least 6. Let $f$ be the number of faces of $G$, and $f_i$ be the number of $i$-faces of $G$. By Euler's formula,
  $$n+f=2+e(G)=2+\frac{1}{2}\sum_{i\geq 6}if_i\geq 2+3f.$$
  It follows that $f\leq\frac{n}{2}-1$ and $e(G)=n+f-2\leq\lfloor\frac{3}{2}(n-2)\rfloor$.
\end{proof}

Let $\{x,y\}$ be a cut of $G$, and $H$ be a component of $G-\{x,y\}$. The graph $G'$ obtained from $G$ by first removing all the edges between $\{x,y\}$ and $H$, and then adding the edges in $\{xz: yz\in E(G), z\in V(H)\}\cup\{yz: xz\in E(G), z\in V(H)\}$, is called a \emph{switching} of $G$ at $\{x,y\}$.

\begin{lemma}\label{LeSwitching}
  If $G$ has a $2$-cut $\{x,y\}$ and $G'$ is a switching of $G$ at $\{x,y\}$, then $e(G')=e(G)$ and $G'$ has a $(0\bmod 4)$-cycle if and only if so does $G$.
\end{lemma}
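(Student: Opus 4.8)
The plan is to show that the switching is realized by a \emph{local} graph isomorphism that transposes $x$ and $y$ only on the side containing $H$, leaving the rest of $G$ untouched, and then to argue that such a map preserves the lengths of all relevant cycles and $(x,y)$-paths. Fix notation: let $R=V(G)\setminus(V(H)\cup\{x,y\})$, and set $G_1=G[V(H)\cup\{x,y\}]$ and $G_2=G[R\cup\{x,y\}]$, so that $G=G_1\cup G_2$ and $G_1$, $G_2$ meet exactly in $x,y$ (and in the edge $xy$, if present). By the definition of switching, $G'$ differs from $G$ only in the edges joining $\{x,y\}$ to $H$; hence $G'[R\cup\{x,y\}]=G_2$ is unchanged, while $G_1':=G'[V(H)\cup\{x,y\}]$ is obtained from $G_1$ by exchanging, for each $z\in V(H)$, adjacency to $x$ with adjacency to $y$. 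Therefore the map $\phi$ that fixes every vertex of $H$ and transposes $x$ and $y$ is a graph isomorphism $G_1\to G_1'$ (it fixes the edge $xy$). The equality $e(G')=e(G)$ is then immediate: $e(G_1')=e(G_1)$ since $\phi$ is an isomorphism, $e(G_2)$ is unchanged, and the only possibly shared edge $xy$ lies in $G'$ exactly when it lies in $G$.

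Next I would classify every cycle of $G$ relative to the cut $\{x,y\}$: a cycle either (Type A) lies in $G_1$, or (Type B) lies in $G_2$, or (Type C) uses both an internal vertex of $H$ and a vertex of $R$. The key structural point is that any Type C cycle must pass through \emph{both} $x$ and $y$: since $\{x,y\}$ separates $H$ from $R$ and each vertex occurs at most once on a cycle, a cycle meeting both sides must cross the cut at least twice, hence at both cut vertices. Consequently a Type C cycle decomposes as $Q_1\cup Q_2$, where $Q_1$ is an $(x,y)$-path in $G_1$ through $H$ and $Q_2$ is an $(x,y)$-path in $G_2$ through $R$, with total length $|Q_1|+|Q_2|$.

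Finally I would transfer cycles between $G$ and $G'$ using $\phi$. Because $\phi$ maps the $(x,y)$-paths of $G_1$ bijectively onto those of $G_1'$ preserving length (an $(x,y)$-path through $H$ goes to a $(y,x)=(x,y)$-path through $H$ of the same length), and likewise maps the cycles of $G_1$ bijectively onto those of $G_1'$, the Type A cycles and the $H$-side $(x,y)$-path lengths are invariant under switching; the Type B cycles and the $R$-side $(x,y)$-path lengths are literally unchanged since $G_2$ is unchanged. Combining these with the Type C decomposition, $G$ has a cycle of length $0\bmod 4$ if and only if one of $G_1,G_2$ does, or there exist an $(x,y)$-path $Q_1$ through $H$ and an $(x,y)$-path $Q_2$ through $R$ with $|Q_1|+|Q_2|\equiv 0\bmod 4$; each of these three conditions is preserved by the switching, so $G'$ contains a $(0\bmod 4)$-cycle iff $G$ does. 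The one step warranting care—the main, if modest, obstacle—is the structural claim that a cycle meeting both sides of the $2$-cut must use both cut vertices, since this is exactly what licenses the clean $Q_1\cup Q_2$ decomposition and the reduction to comparing $(x,y)$-path lengths on the two sides.
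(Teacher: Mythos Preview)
Your argument is correct and complete: the key observation that $\phi$ (swap $x\leftrightarrow y$, fix $V(H)$) is a graph isomorphism $G_1\to G_1'$, together with the standard fact that any cycle crossing a $2$-cut must use both cut vertices, gives exactly the length-preserving bijection on cycles that you describe. The paper itself declares the assertion trivial and omits the proof entirely, so there is nothing to compare against; your write-up is precisely the natural verification one would supply if pressed for details.
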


\begin{proof}
  The assertion is trivial and we omit the details.
\end{proof}

\section{Proof of Theorem \ref{ThMain}}

  We proceed by induction on the order $n$ of $G$. 
  If $n\leq 7$, then $G$ contains no $(0\bmod 4)$-cycle if and only if $G$ contains no $4$-cycle. 
  Thus the assertion can be deduced from the Tur\'an number $\ex(n,C_4)$ (see~\cite{clapham1989graphs}). 
  Assume now that $G$ is a graph of order $n\geq 8$ without a $(0\bmod 4)$-cycle. 
  By Lemmas~\ref{LeThetaNH} and~\ref{LePlanar}, $G$ is planar and contains no $\varTheta^e$, $N^o$, $H_3^e$, $H_4^o$, $H_4^e$. 
  We will first obtain some structural information about $G$ from the the following claims. We remark that by Lemma~\ref{LeSwitching}, every switching of $G$ at some $2$-cut satisfies each of the following claims as well.

  \setcounter{claim}{0}
  \begin{claim}\label{ClTwoConnected}
    $G$ is $2$-connected.
  \end{claim}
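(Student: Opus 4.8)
The plan is to prove the claim by the standard block-decomposition reduction, using the induction hypothesis of Theorem~\ref{ThMain}. Recall that we are really establishing the contrapositive bound $e(G)\le\lfloor\frac{19}{12}(n-1)\rfloor$ for every $(0\bmod 4)$-cycle-free graph $G$ by induction on $n$, with the cases $n\le 7$ already settled via $\ex(n,C_4)$. So to prove that $G$ is $2$-connected it is enough to show that if $G$ were \emph{not} $2$-connected, then the desired edge bound would already follow; consequently any counterexample to the bound must be $2$-connected, and in the rest of the argument we may assume $2$-connectivity.

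First I would split off the two sub-cases of non-$2$-connectivity. If $G$ is disconnected, take one component as $G_1$ and the union of the remaining components as $G_2$, so that $V(G_1)\cap V(G_2)=\emptyset$ and $n_1+n_2=n$. If $G$ is connected but has a cut vertex $v$, then $G-v$ has at least two components; grouping them into two nonempty families and restoring $v$ to each yields subgraphs $G_1,G_2$ with $V(G_1)\cap V(G_2)=\{v\}$, $E(G_1)\cup E(G_2)=E(G)$, $E(G_1)\cap E(G_2)=\emptyset$, and orders $n_1,n_2\ge 2$ satisfying $n_1+n_2=n+1$. In either case each $G_i$ is a subgraph of $G$ of order $n_i<n$, hence inherits the property of having no $(0\bmod 4)$-cycle, and therefore, by the induction hypothesis (or by the base case when $n_i\le 7$), we obtain $e(G_i)\le\lfloor\frac{19}{12}(n_i-1)\rfloor$.

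It then remains to add the two bounds. Since $e(G)=e(G_1)+e(G_2)$ and the floor function is superadditive, i.e.\ $\lfloor a\rfloor+\lfloor b\rfloor\le\lfloor a+b\rfloor$, I would conclude in the cut-vertex case that
\[
e(G)\le\Big\lfloor\tfrac{19}{12}(n_1-1)\Big\rfloor+\Big\lfloor\tfrac{19}{12}(n_2-1)\Big\rfloor\le\Big\lfloor\tfrac{19}{12}(n_1+n_2-2)\Big\rfloor=\Big\lfloor\tfrac{19}{12}(n-1)\Big\rfloor,
\]
while the disconnected case uses $n_1+n_2=n$ and yields the even stronger $e(G)\le\lfloor\frac{19}{12}(n-2)\rfloor$. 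Either way $G$ satisfies the target bound, contradicting that it is a counterexample, so $G$ must be $2$-connected. There is no genuine obstacle in this argument; the only points requiring a little care are that each part has order strictly less than $n$ (so that the induction is legitimate) and at least that covered by the base case (so that the small-$n$ bound applies), together with the superadditivity of the floor, which lets the two partial bounds combine cleanly into the single bound $\lfloor\frac{19}{12}(n-1)\rfloor$.
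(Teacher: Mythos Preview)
Your proof is correct and follows essentially the same route as the paper: split $G$ at a cut vertex into $G_1,G_2$ with $n_1+n_2=n+1$, apply the induction hypothesis to each part, and combine via superadditivity of the floor. You are simply more explicit than the paper in separately treating the disconnected case (which the paper's one-line formulation glosses over) and in justifying the floor inequality.
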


  \begin{proof}
    Suppose that $G$ is not 2-connected. Then $G$ is the union of two nontrivial graphs $G_1,G_2$, intersecting at a vertex $x$. Set $n_i=n(G_i)$, $i=1,2$, where $n_1+n_2=n+1$. By the induction hypothesis, $e(G_i)\leq\lfloor\frac{19}{12}(n_i-1)\rfloor$. Thus
    \begin{align*}
       e(G)=e(G_1)+e(G_2)\leq\left\lfloor\frac{19}{12}(n_1-1)\right\rfloor+\left\lfloor\frac{19}{12}(n_2-1)\right\rfloor\leq\left\lfloor\frac{19}{12}(n-1)\right\rfloor,
    \end{align*}
    as desired.
  \end{proof}

  For a subset $U\subseteq V(G)$, we set $\rho(U)$ to be the number of edges that are incident to a vertex in $U$.

  \begin{claim}\label{ClSubsetU}
    For every subset $U\subset V(G)$, $\rho(U)>\lfloor\frac{3}{2}|U|\rfloor$.
  \end{claim}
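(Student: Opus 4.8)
The plan is to prove this by the standard \emph{delete-and-induct} reduction, the point being that $\rho(U)$ is exactly the number of edges of $G$ destroyed when the whole of $U$ is removed. I would argue by contradiction: suppose some nonempty $U\subsetneq V(G)$ has $\rho(U)\le\lfloor\frac32|U|\rfloor$, set $m=|U|$ and $G'=G-U$. Deleting $U$ removes precisely the edges incident to $U$, so $e(G)=e(G')+\rho(U)$, and $G'$ is an induced subgraph of $G$ on $n-m<n$ vertices; in particular $G'$ inherits the property of having no $(0\bmod4)$-cycle.

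Next I would apply Theorem~\ref{ThMain} to $G'$ via the induction hypothesis (legitimate since $n-m<n$, with the orders $n-m\le 7$ covered by the base case). Because $G'$ has no $(0\bmod4)$-cycle, this gives $e(G')\le\lfloor\frac{19}{12}(n-m-1)\rfloor$, whence
\[
e(G)=e(G')+\rho(U)\le\left\lfloor\tfrac{19}{12}(n-m-1)\right\rfloor+\left\lfloor\tfrac32 m\right\rfloor.
\]
The arithmetic core is the identity $\frac{19}{12}(n-m-1)+\frac32 m=\frac{19}{12}(n-1)-\frac{m}{12}$, which encodes precisely the slack $\frac{19}{12}-\frac32=\frac{1}{12}$ between the two constants. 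Combining it with the superadditivity of the floor, $\lfloor a\rfloor+\lfloor b\rfloor\le\lfloor a+b\rfloor$, yields $e(G)\le\lfloor\frac{19}{12}(n-1)-\frac{m}{12}\rfloor\le\lfloor\frac{19}{12}(n-1)\rfloor$. This contradicts $e(G)>\lfloor\frac{19}{12}(n-1)\rfloor$ (equivalently, it already establishes the target bound in this case), so no such $U$ can exist.

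I do not expect a real obstacle; all the content sits in choosing $G'=G-U$ and in the one-line floor inequality. The only care needed is at the boundary: when $m=n-1$ the graph $G'$ is a single vertex, so the induction hypothesis degenerates to $e(G')\le 0$, but then $\rho(U)=e(G)$ and the same computation still closes the case; and one must insist $U\ne\emptyset$ so that the reduction is genuinely to a smaller graph. It is worth isolating this as a claim because of its structural payoff rather than its difficulty: once it holds, every vertex subset meets strictly more than $\frac32|U|$ edges, which for instance rules out two adjacent vertices of degree~$2$ and supplies exactly the local density that the subsequent, more delicate claims rely on.
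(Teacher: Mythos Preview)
Your proposal is correct and follows essentially the same approach as the paper: assume $\rho(U)\le\lfloor\frac32|U|\rfloor$, apply the induction hypothesis to $G-U$, and add back $\rho(U)$ to obtain $e(G)\le\lfloor\frac{19}{12}(n-1)\rfloor$. The paper's version is terser and does not spell out the floor arithmetic, but the argument is identical.
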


  \begin{proof}
    Notice that $e(G-U)=e(G)-\rho(U)$. Suppose that $\rho(U)\leq\lfloor\frac{3}{2}|U|\rfloor$. By the induction hypothesis, $e(G-U)\leq\lfloor\frac{19}{12}(n-|U|-1)\rfloor$. Thus
    \begin{align*}
       e(G)=e(G-U)+\rho(U)\leq\left\lfloor\frac{19}{12}(n-|U|-1)\right\rfloor+\left\lfloor\frac{3}{2}|U|\right\rfloor
       \leq\left\lfloor\frac{19}{12}(n-1)\right\rfloor,
    \end{align*}
    as desired.
  \end{proof}

  By Claim \ref{ClSubsetU}, we see that every two vertices of degree 2 in $G$ are nonadjacent.

  \begin{claim}\label{ClOddCycle}
    If $\{x,y\}$ is a cut and $H$ is a nontrivial component of $G-\{x,y\}$, then $G[V(H)\cup\{x,y\}]$ contains an odd cycle.
  \end{claim}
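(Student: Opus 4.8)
The plan is to argue by contradiction: assume that $G[V(H)\cup\{x,y\}]$ contains no odd cycle, i.e.\ that it is bipartite, and derive a contradiction by comparing two edge bounds on this subgraph. Write $U=V(H)$ and $H'=G[U\cup\{x,y\}]$. Since $H$ is a nontrivial (hence connected, edge-containing) component, $|U|\ge 2$, so $H'$ has order $|U|+2\ge 4$, and $U\subset V(G)$ is a proper subset.

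First I would observe that every edge incident to a vertex of $U$ lies inside $H'$. Indeed, since $\{x,y\}$ separates $H$ from the rest of $G$, any edge with an endpoint in $U$ has its other endpoint in $U\cup\{x,y\}$. Consequently $e(H')\ge\rho(U)$, because the only edge of $H'$ that $\rho(U)$ might fail to count is $xy$ (should it be present), all other edges of $H'$ being incident to $U$. Applying Claim~\ref{ClSubsetU} to the proper subset $U$ then gives $e(H')\ge\rho(U)>\lfloor\frac{3}{2}|U|\rfloor$.

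On the other hand, since $G$ has no $(0\bmod 4)$-cycle, neither does its subgraph $H'$; and by our assumption $H'$ is bipartite. Hence Lemma~\ref{LeBipartite}, applied to the bipartite, $(0\bmod 4)$-cycle-free graph $H'$ of order $|U|+2\ge 4$, yields $e(H')\le\lfloor\frac{3}{2}((|U|+2)-2)\rfloor=\lfloor\frac{3}{2}|U|\rfloor$. This contradicts the lower bound obtained above, so $H'$ must in fact contain an odd cycle, as claimed.

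The argument is short once the two ingredients are combined, so I do not expect a serious obstacle; the content is really just pairing the density lower bound of Claim~\ref{ClSubsetU} with the bipartite upper bound of Lemma~\ref{LeBipartite}. The only points needing care are (i) confirming that $\rho(U)$ accounts for all edges of $H'$ except possibly $xy$, which is exactly where the cut property of $\{x,y\}$ enters, and (ii) verifying the order hypothesis $|U|+2\ge 4$ of Lemma~\ref{LeBipartite}, which is guaranteed by the nontriviality of $H$.
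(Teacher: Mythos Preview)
Your proof is correct and follows essentially the same approach as the paper's: both argue by contradiction, set $U=V(H)$, apply Lemma~\ref{LeBipartite} to the bipartite subgraph $G[U\cup\{x,y\}]$ of order $|U|+2\ge 4$ to get $e(G[U\cup\{x,y\}])\le\lfloor\frac{3}{2}|U|\rfloor$, and combine this with $\rho(U)\le e(G[U\cup\{x,y\}])$ to contradict Claim~\ref{ClSubsetU}. Your write-up is slightly more explicit about why $\rho(U)\le e(H')$ and why the order hypothesis holds, but the argument is the same.
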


  \begin{proof}
    Set $U=V(H)$ and $G_1=G[U\cup\{x,y\}]$. Since $H$ is nontrivial, $n(G_1)\geq 4$. If $G_1$ is bipartite, then by Lemma \ref{LeBipartite},
    $$\rho(U)\leq e(G_1)\leq\left\lfloor\frac{3}{2}(n(G_1)-2)\right\rfloor=\left\lfloor\frac{3}{2}|U|\right\rfloor,$$
    contradicting Claim \ref{ClSubsetU}.
  \end{proof}

  By Claims \ref{ClTwoConnected} and \ref{ClOddCycle}, we see that if $\{x,y\}$ is a cut of $G$ and $H$ is a nontrivial component of $G-\{x,y\}$, then $G[V(H)\cup\{x,y\}]$ contains an adjustable $(x,y)$-path.

  \begin{claim}\label{ClTwoComponent}
    If $\{x,y\}$ is a cut of $G$, then $G-\{x,y\}$ has exactly two components.
  \end{claim}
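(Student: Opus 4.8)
The plan is to argue by contradiction and rule out three or more components by manufacturing a $\varTheta^e$. Since $G$ is $2$-connected (Claim~\ref{ClTwoConnected}) and $\{x,y\}$ is a cut, $G-\{x,y\}$ certainly has at least two components, so it suffices to show it cannot have three. The engine of the whole argument is the single observation that \emph{each} component of $G-\{x,y\}$, together with $x$ and $y$, supplies an even $(x,y)$-path whose internal vertices lie entirely inside that component. Once this is in hand, picking three distinct components $H_1,H_2,H_3$ yields three even $(x,y)$-paths whose internal vertices lie in pairwise disjoint vertex sets; they are therefore pairwise internally-disjoint and their union is a $\varTheta^e$. By Lemma~\ref{LeThetaNH} this forces a $(0\bmod 4)$-cycle, contradicting our standing assumption on $G$.

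The substance of the proof is establishing that every component contributes such an even $(x,y)$-path, and here I would split into two cases. For a \emph{nontrivial} component $H_i$, the remark recorded just after Claim~\ref{ClOddCycle} (combining Claims~\ref{ClTwoConnected} and~\ref{ClOddCycle}) guarantees that $G[V(H_i)\cup\{x,y\}]$ contains an adjustable $(x,y)$-path, and by definition an adjustable path contains an even $(x,y)$-path; I take that even path as $Q_i$. The case that needs separate care — and the one I expect to be the only genuine obstacle — is a \emph{trivial} component $H_i=\{v_i\}$, because the adjustable-path machinery does not apply to a single vertex. Here I would invoke $2$-connectivity directly: every vertex of $G$ has degree at least $2$, but all neighbours of $v_i$ lie in the two-element set $\{x,y\}$, so $v_i$ must be adjacent to both $x$ and $y$, and then $xv_iy$ is an even $(x,y)$-path of length $2$ with the single internal vertex $v_i\in V(H_i)$.

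Having produced even paths $Q_1,Q_2,Q_3$ with $\en(Q_i)=\{x,y\}$ and internal vertices confined to the respective components, I would simply note that the components $H_1,H_2,H_3$ are vertex-disjoint, hence the $Q_i$ share no internal vertices and together form a $\varTheta^e$; Lemma~\ref{LeThetaNH} then delivers the forbidden $(0\bmod 4)$-cycle. This contradiction shows $G-\{x,y\}$ has at most two components, and combined with the lower bound of two from the fact that $\{x,y\}$ is a cut, it has exactly two, proving Claim~\ref{ClTwoComponent}. The only point one must not skip is verifying that the chosen even paths really are internally-disjoint and that the trivial-component vertex is adjacent to both cut vertices; everything else is immediate from the definitions and Lemma~\ref{LeThetaNH}.
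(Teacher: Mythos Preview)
Your proof is correct and follows essentially the same approach as the paper: assume three components, produce an even $(x,y)$-path through each (via the adjustable path from Claim~\ref{ClOddCycle} in the nontrivial case and the length-$2$ path $xv_iy$ in the trivial case), and combine them into a $\varTheta^e$ to reach a contradiction via Lemma~\ref{LeThetaNH}.
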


  \begin{proof}
    Let $H_1,H_2,H_3$ be three components of $G-\{x,y\}$. We claim that $G[V(H_i)\cup\{x,y\}]$ contains an even $(x,y)$-path for $i=1,2,3$. If $H_i$ is trivial, say $V(H_i)=\{z\}$, then $xzy$ is an even $(x,y)$-path as desired; if $H_i$ is nontrivial, then by Claim \ref{ClOddCycle}, $G[V(H_i)\cup\{x,y\}]$ contains an adjustable $(x,y)$-path, which contains an even $(x,y)$-path. Now let $P_i$ be an even $(x,y)$-path in $G[V(H_i)\cup\{x,y\}]$, $i=1,2,3$. Then $P_1\cup P_2\cup P_3$ is a $\varTheta^e$, a contradiction.
  \end{proof}

  We call a 2-cut $\{x,y\}$ of $G$ a \emph{good} cut if for each component $H$ of $G-\{x,y\}$, $G[V(H)\cup\{x,y\}]$ contains an odd cycle. From Claim \ref{ClOddCycle}, we see that the cut $\{x,y\}$ is good if either $xy\in E(G)$ or both components of $G-\{x,y\}$ are nontrivial. We denote by $T_1(x,y)$ the triangle with two special vertices $x,y$, and by $T_2(x,y)$ a $6$-cycle with a chord of even span, such that $x,y$ are the two vertices of distance~3 (see Figure~5).

\begin{center}
\begin{picture}(70,90)\label{FiT2xy}
\thicklines \put(20,10){\put(15,0){\circle*{4}} \put(0,20){\circle*{4}} \put(0,50){\circle*{4}} \put(30,20){\circle*{4}} \put(30,50){\circle*{4}} \put(15,70){\circle*{4}}
\put(15,0){\line(3,4){15}} \put(15,0){\line(-3,4){15}} \put(0,20){\line(0,1){30}} \put(30,20){\line(0,1){30}} \put(15,70){\line(3,-4){15}} \put(15,70){\line(-3,-4){15}} \put(0,50){\line(1,0){30}}
\put(20,-5){$y$} \put(20,70){$x$} }
\end{picture}

\small Figure 5. The construction of $T_2(x,y)$.
\end{center}

  \begin{claim}\label{ClGoodCut}
    Let $\{x_0,y_0\}$ be a good cut of $G$, and let $B_0,D_0$ be the two components of $G-\{x_0,y_0\}$. Let $\{x,y\}$ be a good cut of $G$ with $x,y\notin V(B_0)$ such that the component $B$ of $G-\{x,y\}$ containing $B_0$ is as large as possible. Then $G-B$ has the construction $T_1(x,y)$ or $T_2(x,y)$ (with possibly a switching at $\{x,y\}$).
  \end{claim}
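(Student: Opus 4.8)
The plan is to show that the far component $D^+:=G-B=G[V(D)\cup\{x,y\}]$, where $D$ is the component of $G-\{x,y\}$ other than $B$, is forced to be a minimal odd-cycle gadget between $x$ and $y$. First I would dispose of the degenerate case: if $D$ is a single vertex $z$, then since $\{x,y\}$ is good, $G[\{x,y,z\}]$ contains an odd cycle, i.e.\ a triangle, so $D^+=T_1(x,y)$. Hence I may assume $D$ is nontrivial; then by the remark following Claim~\ref{ClOddCycle}, both $B^+=G[V(B)\cup\{x,y\}]$ and $D^+$ contain adjustable $(x,y)$-paths, say $R_B$ and $R_D$, sharing only $x,y$.

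The next step is to control the parities available inside $D^+$. For internally-disjoint adjustable $(x,y)$-paths $R,R'$, the $N^o$-type computation in the proof of Lemma~\ref{LeThetaNH} shows $R$ realizes $(x,y)$-paths of two consecutive residues $a,a+1\pmod 4$ and $R'$ realizes $b,b+1$, so $R\cup R'$ already produces cycles of residues $a+b,\,a+b+1,\,a+b+2$. Three pairwise internally-disjoint adjustable $(x,y)$-paths would force $a_i+a_j\equiv 1$ for every pair, hence $2\sum_i a_i\equiv 3\pmod 4$, which is impossible. Thus $G$ has no three pairwise internally-disjoint adjustable $(x,y)$-paths, and since $R_B$ accounts for one, $D^+$ has no two internally-disjoint adjustable $(x,y)$-paths. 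Writing $S_B,S_D\subseteq\mathbb{Z}_4$ for the residues of $(x,y)$-paths on each side, the absence of a $(0\bmod 4)$-cycle crossing the cut gives $S_B\cap(-S_D)=\emptyset$; since each of $S_B,S_D$ contains two consecutive residues, I would conclude that $S_D$ is \emph{exactly} two consecutive residues. This is ultimately what forces the triangle of $T_1$ and the even-span chord of $T_2$.

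With the parities controlled, I would bound $|D|$ via Claim~\ref{ClSubsetU}: taking $U$ to be the internal vertices of a connecting path or of the odd cycle, the bound $\rho(U)>\lfloor\tfrac32|U|\rfloor$ (in particular, no two degree-$2$ vertices are adjacent) rules out long subdivided paths and forces the odd cycle to be short. The crux is then the maximality of $B$. Here I would exploit the remark that a $2$-cut $\{u,v\}$ with $uv\in E(G)$ is automatically good: whenever the odd cycle of $D^+$ is separated from $x$ or from $y$ by an edge-cut lying inside $D^+$, relocating $\{x,y\}$ to that edge produces a good cut whose $B_0$-side strictly contains $B$, contradicting maximality. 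This is exactly what collapses a configuration such as a triangle joined to $x,y$ by two pendant edges down to the triangle itself. Running this reduction shows the odd cycle must meet the cut and that $D^+$ is $2$-connected (a cut-vertex of $D^+$ separating $x$ from $y$ would again let me relocate the cut); the surviving $2$-connected configurations compatible with the two-residue constraint, the density bound, and planarity are precisely $T_1(x,y)$ and $T_2(x,y)$, after a switching at $\{x,y\}$ (Lemma~\ref{LeSwitching}) to place the neighbours of $x$ and $y$ in canonical position.

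I expect the main obstacle to be this last, maximality-driven case analysis: enumerating how the short odd cycle meets the two connecting paths, and in each case either relocating the cut along an edge to enlarge $B$ (eliminating the configuration) or recognizing it already as $T_1$ or $T_2$, all while correctly tracking the parity/residue data so that, for instance, the hexagon's chord has even span. Managing the switching bookkeeping and excluding a cut-vertex in $D^+$ are the delicate points, whereas the residue and density inputs should be routine once set up.
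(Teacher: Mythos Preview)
Your residue bookkeeping is correct, and the observation that $D^+$ cannot contain two internally-disjoint adjustable $(x,y)$-paths (else together with $R_B$ one gets three, forcing $2\sum a_i\equiv 3\pmod 4$) is valid and pleasant. But the proposal halts exactly where the content lies. You assert that Claim~\ref{ClSubsetU} ``forces the odd cycle to be short''; it does not. That bound only forbids adjacent degree-$2$ vertices, and says nothing about the length of any cycle until you have first shown that almost every vertex on it \emph{has} degree $2$ in $G$ --- that is, until you have already controlled all chords and all attachments from the $B$-side. Controlling those attachments is the entire claim. Concretely, nothing in your setup excludes $D^+$ from containing a long even cycle with several bridges, and it is precisely this situation that dominates the argument; the constraint $|S_D|=2$ is far too coarse to cut it down, and ``relocating the cut along an edge'' only helps once you already know where the edges sit.

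The paper takes a different and much more concrete route, and the organising idea you are missing is an \emph{anchor cycle}. After disposing of the case where $G_1$ has no even cycle (maximality forces a single odd block, then the degree-$2$ argument gives a triangle, i.e.\ $T_1$), the paper fixes an even cycle $C\subseteq G_1$ chosen so that the component $B_1\supseteq B$ of $G-C$ is as large as possible, and then proves six subclaims about $C$: $G-C$ has only the one component $B_1$; $C$ has at most one chord, of even span; writing $V(C)=X\cup Y$ for the colour classes, $|N_X(B_1)|,|N_Y(B_1)|\in\{1,2\}$ with at least one equal to $1$; hence $|C|=6$; and finally $C$ has a chord with $|N_X(B_1)|=|N_Y(B_1)|=1$. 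Each step is proved by exhibiting a $\varTheta^e$, $N^o$, $H_3^e$ or $H_4^{o/e}$ via Lemmas~\ref{LeBridge}--\ref{LeBridgeAdjustablePath}, repeatedly exploiting the maximality of $B_1$ (not of $B$) to rule out alternative even cycles. Only after $|C|=6$ do degree-$2$ and good-cut maximality pin down $T_2$. Your ``maximality-driven case analysis'' would, if written out, have to reconstruct essentially this bridge analysis, and without the even-cycle anchor there is no evident way to organise it.
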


  \begin{proof}
    By Claim \ref{ClTwoComponent}, $G-\{x,y\}$ has only two components $B$ and $D$. Set $G_1=G-B=G[V(D)\cup\{x,y\}]$ and $G_2=G[V(B)\cup\{x,y\}]$. Notice that $G_1$ (or $G_2$) contains an odd cycle and then contains an adjustable $(x,y)$-path.

    Suppose first that $G_1$ contains no even cycle. Then every block of $G_1$ is either a $K_2$ or an odd cycle, and at least one block of $G_1$ is an odd cycle since $\{x,y\}$ is a good cut. By the choice of $\{x,y\}$ that $B$ is maximal, we see that $G_1$ has exactly one block (which is an odd cycle). If $|G_1|\geq 5$, two adjacent vertices contained in $D$ are of degree 2, contradicting Claim \ref{ClSubsetU}. Thus $G_1$ is a triangle, which has the construction $T_1$.

    Now we assume that $G_1$ has an even cycle $C$. Let $B_1$ be the component of $G-C$ containing $B$. We choose the even cycle $C$ of $G_1$ such that $B_1$ is as large as possible. We give an orientation of $C$.

    \begin{subclaim}\label{ClOneComponent}
      $G-C$ has exactly one component $B_1$.
    \end{subclaim}

    \begin{proof}
    Suppose that $G-C$ has a second component $D_1$. We distinguish the following two cases.

    \underline{Case A. $|N_C(D_1)|\geq 3$.} Let $u_1,u_2,u_3\in N_C(D_1)$. There are three internally-disjoint paths $P_1,P_2,P_3$ from $u\in V(D_1)$ to $u_1,u_2,u_3$, respectively. Assume that $u_1,u_2,u_3$ appear in this order along $C$. We claim that $N_C(B_1)\subseteq\{u_1,u_2,u_3\}$. Suppose $B_1$ has a neighbor $v_1\in V(C)\backslash\{u_1,u_2,u_3\}$, say $v_1\in V(C(u_3,u_1))$. Notice that $C[u_1,u_3]\cup P_1\cup P_2\cup P_3$ is a $\varTheta$, and then contains an even cycle $C_1$. The component of $G-C_1$ containing $B_1$ also contains $v_1$, contradicting the choice of $C$. Thus we have that $N_C(B_1)\subseteq\{u_1,u_2,u_3\}$. It follows that $N_C(D_1)=\{u_1,u_2,u_3\}$.

    Suppose now that $N_C(B_1)=\{u_1,u_2,u_3\}$. Let $C_i=C[u_i,u_{i+1}]u_{i+1}P_{i+1}uP_iu_i$, $i=1,2,3$ (the subscripts are taken modular 3). If $C_i$ is even, then the component of $G-C_i$ containing $B_1$ also contains $u_{i+2}$, a contradiction. Thus all the three cycles $C_1,C_2,C_3$ are odd. This implies that $|C|+2(|P_1|+|P_2|+|P_3|)$ is odd, contradicting that $C$ is even. Thus we conclude that $B_1$ has exactly two neighbors on $C$, say $N_C(B_1)=\{u_1,u_3\}$. By the choice of the cut $\{x,y\}$, we see that $\{x,y\}=\{u_1,u_3\}$, say $x=u_1,y=u_3$.

    Let $R$ be an adjustable $(x,y)$-path in $G_2$, which is a bridge of $C$. If the span $\sigma_C(R)\geq 2$, then there is a bridge $P$ in $D$ from $C(y,x)$ to $C(x,y)$ (recall that $V(C)\backslash\{x,y\}$ is contained in $D$). However $P\cup P_1\cup P_2\cup P_3\cup(C-y)$ contains a $\varTheta$, and then contains an even cycle avoiding $y$, a contradiction. Thus we conclude that $\sigma_C(R)=1$, which is, $xy\in E(C)$. Since $|C|\geq 6$, either $|C[x,u_2]|\geq 3$ or $|C[u_2,y]|\geq 3$. Recall that there are no two adjacent vertices of degree 2. There is a bridge $P$ of $C$ with $\en(P)\neq\{x,y\}$. It follows that $P\cup C\cup P_1\cup P_2\cup P_3$ contains $\varTheta$ avoiding $x$ or $y$, a contradiction.

    \underline{Case B. $|N_C(D_1)|=2$.} Let $N_C(D_1)=\{u_1,u_2\}$. Note that $\{u_1,u_2\}$ is a cut of $G$. By the choice of $\{x,y\}$, we see that $D_1$ is trivial and $u_1u_2\notin E(G)$. Set $V(D_1)=\{u\}$ and $P_1=u_1uu_2$. Thus $P_1$ is an even bridge of $C$. Since $u_1u_2\notin E(G)$, we have $\sigma_C(P_1)\geq 2$. By Claim \ref{ClTwoComponent}, there is a bridge $P_2$ from $C(u_1,u_2)$ to $C(u_2,u_1)$ (in the component of $G-\{u_1,u_2\}$ not containing $u$). Set $\en(P_2)=\{v_1,v_2\}$, where $u_1,v_1,u_2,v_2$ appear in this order along $C$. Recall that $G_1$ contains an adjustable $(x,y)$-path, which can be extended to an adjustable bridge $R$ of $C$. If $\sigma_C(P_1)$ is even, or $\sigma_C(R)$ is even, then $C\cup P_1$ or $C\cup R$ contains a $(0\bmod 4)$-cycle by Lemma \ref{LeBridge}, a contradiction. So we assume that both $P_1$ and $R$ have odd spans.

    Suppose first that $P_2$ is a chord of $C$, i.e., $P_2=v_1v_2$. We claim that $N_C(B_1)\subseteq\{u_1,u_2,v_1,v_2\}$. Suppose otherwise that $B_1$ has a neighbor $v\in V(C(u_1,v_1))$. Then $C[v_1,u_1]\cup P_1\cup P_2$ is a $\varTheta$, and contains an even cycle avoiding $v$, contradicting the choice of $C$. Thus we conclude that $N_C(B_1)\subseteq\{u_1,u_2,v_1,v_2\}$, specially $\en(R)\subset\{u_1,u_2,v_1,v_2\}$. If $\en(R)=\{v_1,v_2\}$, then $R$ and $P_1$ are crossed on $C$. By Lemma \ref{LeBridge}, $C\cup P_1\cup R$ contains a $(0\bmod 4)$-cycle, a contradiction.

    Assume now that $\en(R)=\{u_1,u_2\}$. If $\sigma_C(P_2)$ is odd, then $C[v_1,v_2]v_2v_1$ is an even cycle avoiding $u_1$, contradicting the choice of $C$. So we assume that $\sigma_C(P_2)$ is even. Recall that $\sigma_C(P_1)$ is odd, implying that either $C[u_1,v_1]$ or $C[v_1,u_2]$ is even. We assume without loss of generality that $C[u_1,v_1]$ is even. It follows that $C[v_1,u_2]$ is odd, $C[u_2,v_2]$ is odd and $C[v_2,u_1]$ is even. Thus $C[u_1,v_1]\cup C[u_2,v_2]\cup P_2$ is an even $(u_1,u_2)$-path. Together wise $P_1$ and $R$, we get a $\varTheta^e$, a contradiction.

    So we conclude without loss of generality that $\en(R)=\{u_1,v_1\}$. Notice that $\sigma_C(R)$ is odd, $\sigma_C(P_1)$ is odd and $\sigma_C(P_2)$ is even. We have that $C[u_1,v_1]$ is odd, $C[v_1,u_2]$ is even, $C[u_2,v_2]$ is even and $C[v_2,u_1]$ is odd. Thus $v_1v_2C[v_2,u_1]$ and $C[v_1,u_2]u_2P_1$ are two even $(u_1,v_1)$-path. Together with $R$, we get a $\varTheta^e$, a contradiction.

    Suppose second that the internal vertices of $P_2$ are in a component $D_2$ of $G-C$ other than $B_1,D_1$. By the analysis of Case A, we see that $D_2$ is trivial as well. It follows that $P_1,P_2$ are two crossed even bridges of $C$. By Lemma \ref{LeBridge}, $C\cup P_1\cup P_2$ contains a $(0\bmod 4)$-cycle, a contradiction.

    Suppose finally that the internal vertices of $P_2$ are in $B_1$, which implies that $v_1,v_2\in N_C(B_1)$. If $\en(R)=\{v_1,v_2\}$, then by Lemma \ref{LeBridge}, $C\cup P_1\cup R$ contains a $(0\bmod 4)$-cycle, a contradiction. Thus we have that $\en(R)\neq\{v_1,v_2\}$.

    Assume now that $v_1\in\en(R)$. Recall that $R$ contains an odd cycle $C'$. Let $P'_1$ be the path in $R$ from $v_1$ to $C'$, and let $P'_2$ be a path from $v_2$ to $R-C$ with all internal vertices in $B_1$. Set $\en(P'_2)=\{v_2,z\}$. We claim that $z\in V(P'_1)\backslash\{v_1\}$. If $z\notin V(P'_1)\backslash\{v_1\}$, then $R\cup P'_2$ contains an adjustable $(v_1,v_2)$-path $R'$ (containing $C'$). If $R'$ is internally-disjoint with $C$, then by Lemma \ref{LeBridge}, $C\cup P_1\cup R'$ contains a $(0\bmod 4)$-cycle, a contradiction. So $R'$ and $C$ intersect at a third vertices which can only be contained in $C'$. It follows that there are 3 vertex-disjoint paths from $C'$ to $C$ (one of which is trivial), contradicting that $\{x,y\}$ is a cut separating $C'-\{x,y\}$ and $C-\{x,y\}$. Thus as we claimed, $z\in V(P'_1)\backslash\{v_1\}$. It follows that $P'_1[v_1,z]zP'_2$ is a bridge of $C$ which is crossed with $P_1$, and $R-(P'_1-z)$ is an adjustable path from $P'_1[v_1,z]zP'_2-C$ to $C$. By Lemma \ref{LeBridgeCrossed}, $C\cup R\cup P'_2$ contains a $(0\bmod 4)$-cycle, a contradiction.

     So we conclude that $v_1\notin\en(R)$. Let $P'_1$ be a path from $v_1$ to $R-C$ with all internal vertices in $B_1$. It follows that $R\cup P_1$ contains an adjustable path $R'$, say from $v_1$ to $z\in\en(R)$. If $R'$ is internally-disjoint with $C$, then $R'$ is an adjustable bridge of $C$ with $v_1\in\en(R')$. By the analysis above, we can get a contradiction. So assume that $R'$ and $C$ intersect at a third vertices which can only be contained in $C'$, contradicting that $\{x,y\}$ is a cut separating $C'-\{x,y\}$ and $C-\{x,y\}$.
    \end{proof}

    \begin{subclaim}\label{ClOneChord}
      $C$ has at most one chord; and if $C$ has a chord, then the chord has an even span.
    \end{subclaim}

    \begin{proof}
      Suppose that $C$ has two chords $u_1u_2$ and $v_1v_2$. Notice that $|C|\geq 6$. $C\cup\{u_1u_2,v_1v_2\}$ contains a $\varTheta$ avoiding some vertex of $C$. Thus there is an even cycle $C_1$ with $V(C_1)\subset V(C)$. It follows that the component of $G-C_1$ containing $B$ also contains $B_1$. By Claim \ref{ClOneComponent}, $G-C_1$ is connected, contradicting the choice of $C$. If $C$ has a chord $u_1u_2$ with $C[u_1,u_2]$ odd, then $C_1=u_1Cu_2u_1$ is an even cycle with $V(C_1)\subset V(C)$, also a contradiction.
    \end{proof}

    Let $V(C)=X\cup Y$ such that each two vertices in $X$ ($Y$) have an even distance on $C$.

    \begin{subclaim}\label{ClTwoNeighbor}
      $1\leq|N_X(B_1)|\leq 2$ and $1\leq|N_Y(B_1)|\leq 2$.
    \end{subclaim}

    \begin{proof}
      Suppose that $|N_X(B_1)|\geq 3$ and let $x_1,x_2,x_3\in N_X(B_1)$. There are three internally-disjoint paths $P_1,P_2,P_3$ from $u\in V(B_1)$ to $x_1,x_2,x_3$, respectively. Since each two vertices in $\{x_1,x_2,x_3\}$ have an even distance on $C$, we see that $C\cup P_1\cup P_2\cup P_3$ is an $H_3^e$, a contradiction. If $|N_X(B_1)|=0$, then there are two vertex-disjoint paths from $\{x,y\}$ to $Y$. Together with an adjustable $(x,y)$-path of $G_2$, we have an adjustable bridge $R$ of $C$ with $\sigma_C(R)$ even. By Lemma \ref{LeBridge}, $C\cup R$ contains a $(0\bmod 4)$-cycle, a contradiction. The second assertion can be proved similarly.
    \end{proof}

    \begin{subclaim}\label{ClOneNeighbor}
      Either $|N_X(B_1)|=1$ or $|N_Y(B_1)|=1$.
    \end{subclaim}

    \begin{proof}
      Suppose that $|N_X(B_1)|=2$ and $|N_Y(B_1)|=2$, say $N_X(B_1)=\{x_1,x_2\}$, $N_Y(B_1)=\{y_1,y_2\}$. It follows that $x,y\notin V(C)$. If there are two vertex-disjoint paths from $\{x,y\}$ to $\{x_1,x_2\}$ in $G-Y$, then together with an adjustable $(x,y)$-path of $G_2$, we get an adjustable bridge $R$ of $C$ with an even span. By Lemma \ref{LeBridge}, $C\cup R$ contains a $(0\bmod 4)$-cycle, a contradiction. Thus there is a vertex $x'$ separating $\{x,y\}\backslash\{x'\}$ and $X$ in $G-Y$, and similarly there is a vertex $y'$ separating $\{x,y\}\backslash\{y'\}$ and $Y$ in $G-X$, implying that $\{x',y'\}$ is a good cut of $G$. We can choose $x',y'$ such that there are two internally-disjoint paths from $x'$ to $\{x_1,x_2\}$ in $G-Y$, and there are two internally-disjoint paths from $y'$ to $\{y_1,y_2\}$ in $G-X$. By the choice of $\{x,y\}$, we see that $\{x,y\}=\{x',y'\}$, say $x=x',y=y'$.

      Let $P_1^x,P_2^x$ be two internally-disjoint paths from $x$ to $x_1,x_2$, and $P_1^y,P_2^y$ be two internally-disjoint paths from $y$ to $y_1,y_2$. Notice that $P_i^x,P_j^y$ are vertex-disjoint, $i,j=1,2$. We see that $P_1^x\cup P_2^x$ and $P_1^y\cup P_2^y$ are two bridge of $C$ with even spans. Recall that $G_2$ has an adjustable $(x,y)$-path $R$. By Lemma \ref{LeBridgeAdjustablePath}, $C\cup P_1^x\cup P_2^x\cup P_1^y\cup P_2^y\cup R$ contains a $(0\bmod 4)$-cycle, a contradiction.
    \end{proof}

    \begin{subclaim}\label{ClCycle6}
      $|C|=6$.
    \end{subclaim}

    \begin{proof}
      Suppose that $|C|\geq 10$. By Claim \ref{ClOneChord}, $C$ has at most one chord. By Claims \ref{ClTwoNeighbor} and \ref{ClOneNeighbor}, $N_C(B_1)\leq 3$. This implies that all but at most 5 vertices of $C$ have degree 2 in $G$. Since no two vertices of degree 2 are adjacent, we have that $|C|=10$, $C$ has a chord, $|N_C(B_1)|=3$, and either $N_C(B_1)\subseteq X$ or $N_C(B_1)\subseteq Y$, contradicting Claim \ref{ClTwoNeighbor}.
    \end{proof}

    Now let $C=x_1y_1x_2y_2x_3y_3x_1$, where $X=\{x_1,x_2,x_3\}, Y=\{y_1,y_2,y_3\}$.

    \begin{subclaim}\label{ClOneChordOneNeighbor}
      $C$ has a chord and $|N_X(B_1)|=|N_Y(B_1)|=1$.
    \end{subclaim}

    \begin{proof}
      By Claims \ref{ClTwoNeighbor} and \ref{ClOneNeighbor}, $|N_C(B_1)|\leq 3$. If $C$ has no chord, then $C$ contains at least three vertices of degree 2. Since no two vertices of degree 2 are adjacent, we have that $|N_C(B_1)|=3$, and either $N_C(B_1)\subseteq X$ or $N_C(B_1)\subseteq Y$, a contradiction. Thus we conclude that $C$ has a chord.

      Now suppose without loss of generality that $|N_X(B_1)|=2$ and $|N_Y(B_1)|=1$, say $N_Y(B_1)=\{y_3\}$. We claim that $y_3=x$ or $y$. Recall that there are no two vertex-disjoint paths from $\{x,y\}$ to $N_X(B_1)$ in $G-Y$. Let $x'$ be a vertex separating $\{x,y\}\backslash\{x'\}$ and $N_X(B_1)$ in $G-Y$. Then $\{x',y_3\}$ is a good cut of $G$, which implies that $\{x',y_3\}=\{x,y\}$ by the choice of $\{x,y\}$. We assume without loss of generality that $y_3=y$. By the choice of $\{x,y\}$, there are two internally-disjoint paths from $x$ to $N_X(B_1)$ not passing through $y$.

      Suppose first that $N_X(B_1)=\{x_1,x_3\}$. Notice that $\{x_1,x_3\}$ is not a good cut of $G$. This implies that $y_1y_3$ or $y_2y_3$ is the chord of $C$. However, $x_2,y_2$ or $x_2,y_1$ are two adjacent vertices of degree 2, a contradiction. So we assume without loss of generality that $N_X(B_1)=\{x_1,x_2\}$.

      Let $P_1,P_2$ be two internally-disjoint paths from $x$ to $\{x_1,x_2\}$ not passing through $y$, $R$ be an adjustable $(x,y)$-path in $G_2$. If $P_1xP_2$ is even, then it is an even bridge of $C$ with an even span. By Lemma \ref{LeBridge}, $C\cup P_1\cup P_2$ contains a $(0\bmod 4)$-cycle, a contradiction. Thus we have that $P_1xP_2$ is odd.

      Notice that $\{x_2,y_3\}$ is not a good cut of $G$. This implies that either $y_1y_2$ or $x_1x_3$ is the chord of $C$. If $x_1x_3$ is the chord, then $y_3x_1x_3y_3$ is an adjustable $(y_3,x_1)$-path, $x_1y_1x_2P_2xP_1x_1$ is an adjustable $(x_1,x)$-path. Together with the adjustable $(x,y)$-path $R$, we find an $N^o$ in $C\cup P_1\cup P_2\cup R\cup x_1x_3$, a contradiction. Now we assume that $y_1y_2$ is the chord of $C$. If $P_1$ is odd, then $P_2$ is even. Thus $xP_1x_1y_3$ and $xP_2x_2y_1y_2x_3y_3$ are two even $(x,y)$-paths. Together with an even $(x,y)$-path in $R$, we find an $\varTheta^e$, a contradiction. If $P_1$ is even, then $P_2$ is odd. Thus $P_1$ and $P_2x_2y_2y_1x_1$ are two even $(x,x_1)$-paths. Together with an odd $(x,y)$-path in $R$ and $y_3x_1$, we find an $\varTheta^e$, again a contradiction.
    \end{proof}

    Now by Claim \ref{ClOneChordOneNeighbor}, and by the choice of $\{x,y\}$, we have that $\{x,y\}=N_C(B_1)$, say $N_X(B_1)=\{x\}$ and $N_Y(B_1)=\{y\}$. We assume without loss of generality that $y_1y_3$ is the chord of $C$. It follows that $x=x_1$; for otherwise $y_1y_3$ is a good cut of $G$. We also have $y=y_2$; for otherwise $C$ contains two adjacent vertices of degree 2. Henceforth $G_1$ has the construction $T_2$, as desired.
  \end{proof}

  \begin{claim}\label{ClNoGoodCut}
    $G$ has no good cut.
  \end{claim}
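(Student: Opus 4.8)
The plan is to argue by contradiction. Assuming $G$ has a good cut, I would use it as the initial cut $\{x_0,y_0\}$ in Claim~\ref{ClGoodCut} to fix a good cut $\{x,y\}$ whose small side $G-B=G[V(D)\cup\{x,y\}]$ is exactly $T_1(x,y)$ or $T_2(x,y)$, with $B$ the large side containing $B_0$. The first observation is that both sides behave like adjustable $(x,y)$-paths: the $B$-side contains an odd cycle (since $\{x,y\}$ is good), hence by $2$-connectivity an adjustable $(x,y)$-path $R$ built from an odd cycle $C_B\subseteq V(B)\cup\{x,y\}$, while the $D$-side explicitly offers $(x,y)$-paths of controlled parity (in $T_1$: the edge $xy$ and the path $xzy$; in $T_2$: the two length-$3$ arcs and the two length-$4$ chord-paths). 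Since an adjustable path always contains two $(x,y)$-paths of consecutive lengths modulo~$4$, combining $R$ with the explicit $D$-side paths yields cycles whose lengths cover three consecutive residues modulo~$4$; demanding that none be $0\bmod 4$ pins down the residues of $R$.

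The key tool for the contradiction will be Lemma~\ref{LeTwoCycleBrige}: I would match the odd cycle $C_B$ on the $B$-side with an odd cycle $C_D$ on the $D$-side of the same length modulo~$4$, joined by two vertex-disjoint paths through $x$ and through $y$, whose parity sum I can adjust by rerouting around the odd cycle $C_B$. For $T_2$ this resolves everything at once: $T_2$ contains both a triangle (length $3\equiv 3$) and a pentagon (length $5\equiv 1$), so whatever the residue of $|C_B|$ (necessarily $1$ or $3\bmod 4$), I can select $C_D$ with $|C_D|\equiv|C_B|$ and apply Lemma~\ref{LeTwoCycleBrige}(1), or part~(2) when $C_B$ meets the cut, to extract a $(0\bmod 4)$-cycle. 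The same argument disposes of the $T_1$ case as soon as the $B$-side contains an odd cycle of length $\equiv 3\bmod 4$, matched against the triangle $xyz$.

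The hard part is the residual $T_1$ case in which \emph{every} odd cycle of $G_2:=G[V(B)\cup\{x,y\}]$ has length $\equiv 1\bmod 4$, since then no odd cycle matches the triangle. Here I would first record that $G_2$ is $2$-connected (the edge $xy$ keeps $x,y$ in a common block, so a cut-vertex of $G_2$ would be a cut-vertex of $G$) and has at least $7$ vertices. The assumption forces every even $(x,y)$-path of $G_2$ to have length $\equiv 0\bmod 4$ (otherwise $xy$ closes it into an odd cycle $\equiv 3$) and rules out two internally-disjoint even $(x,y)$-paths, whose union would already be a $(0\bmod 4)$-cycle. I would then exploit the edge $xy$: an odd $(x,y)$-path $R^o\equiv 1$ together with $xy$ yields an even cycle $C^*$ of length $\equiv 2\bmod 4$, and I would analyze the bridges of $C^*$ contributed by the large block $B$ exactly as in the proof of Claim~\ref{ClGoodCut} — an even bridge of even span, a crossed pair of even bridges, or three pairwise internally-disjoint even bridges would finish via Lemma~\ref{LeBridge}.

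I expect this residual $T_1$ analysis to be the main obstacle. One must show that the combination of ``all odd cycles $\equiv 1\bmod 4$'', $2$-connectivity, at least $7$ vertices, and the absence of two disjoint even $(x,y)$-paths is untenable, which seems to require a bridge/ear case analysis on $C^*$ in the spirit of Subclaims~\ref{ClOneComponent}--\ref{ClOneChordOneNeighbor} rather than a single clean lemma application. The remaining cases ($T_2$, and $T_1$ with an odd cycle $\equiv 3$) should follow cleanly from the matching argument of the second paragraph.
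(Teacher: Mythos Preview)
Your plan has a fatal strategic error: you are trying to derive a $(0\bmod 4)$-cycle from the existence of a good cut, but this implication is simply false. The extremal graphs $L_8$ and $L_{13}$ of Section~\ref{extremal} are $2$-connected, have no $(0\bmod 4)$-cycle, satisfy all the structural properties established in Claims~\ref{ClTwoConnected}--\ref{ClGoodCut}, and \emph{do} have good cuts (for $L_{13}$, the pair $\{x,y\}$ of either copy of $T_2$ is one). So no amount of bridge/ear analysis in your ``residual $T_1$ case'' can succeed, and in fact your ``easy'' $T_2$ case already breaks. Concretely, in $L_{13}$ take $C_D$ to be the triangle of the left $T_2$ and $C_B$ the triangle of the right $T_2$; the two cycles are vertex-disjoint with $|C_B|\equiv|C_D|\bmod 4$, but every pair of vertex-disjoint $(C_B,C_D)$-paths has odd total length, so Lemma~\ref{LeTwoCycleBrige}(1) never fires. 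Your ``rerouting around $C_B$'' does not repair this: the paths in Lemma~\ref{LeTwoCycleBrige} are internally disjoint from the two cycles, so you cannot absorb an arc of $C_B$ to flip a parity. The same obstruction appears in $L_8$ for your $T_1$ case with a $3\bmod 4$ cycle on the $B$-side: the unique paths from $C_B-x$ to $C_D-x$ avoiding $x$ are all odd, so Lemma~\ref{LeTwoCycleBrige}(2) does not apply either.

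What the paper does instead is not to look for a forbidden cycle at all, but to \emph{count edges}. It applies Claim~\ref{ClGoodCut} twice --- once toward each side of the initial good cut --- obtaining two terminal pieces $G_1,G_2\in\{T_1,T_2\}$ joined by two shortest vertex-disjoint paths $P^x,P^y$. Three short subclaims then show that $|P^x|+|P^y|$ is determined $\bmod\,4$ by the types of $G_1,G_2$, that nothing lies outside $H_1\cup H_2\cup P^x\cup P^y$, and that at most two edges cross between $P^x$ and $P^y$. A direct count in each of the three type-combinations then yields $e(G)\le\lfloor\tfrac{19}{12}(n-1)\rfloor$, which is the desired conclusion of the induction. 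The contradiction is with the standing assumption $e(G)>\lfloor\tfrac{19}{12}(n-1)\rfloor$, not with the absence of a $(0\bmod 4)$-cycle --- and that distinction is exactly what your proposal misses.
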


  \begin{proof}
    Suppose that $\{x_0,y_0\}$ is a good cut of $G$ and $B_0,D_0$ be the two components of $G-\{x_0,y_0\}$. Let $\{x_1,y_1\}$ be a good cut with $x_1,y_1\notin V(B_0)$ such that the component of $G-\{x_1,y_1\}$ containing $B_0$ is as large as possible, and let $\{x_2,y_2\}$ be a good cut with $x_2,y_2\notin V(D_0)$ such that the component of $G-\{x_2,y_2\}$ containing $D_0$ is as large as possible (possibly $\{x_1,y_1\}\cap\{x_2,y_2\}\neq\emptyset$). Let $H_1$ be the component of $G-\{x_1,y_1\}$ not containing $B_0$, and $H_2$ be the component of $G-\{x_2,y_2\}$ not containing $D_0$. By Claim \ref{ClGoodCut}, $G_i:=G[V(H_i)\cup\{x_i,y_i\}]$ has the construction $T_1(x_i,y_i)$ or $T_2(x_i,y_i)$, $i=1,2$.

    Since $G$ is 2-connected, there are two vertex-disjoint paths from $\{x_1,y_1\}$ to $\{x_2,y_2\}$. We let $P^x,P^y$ be such two paths with $|P^x|+|P^y|$ as small as possible (specially, $P^x$ and $P^y$ are induced paths). We assume without loss of generality that $\en(P^x)=\{x_1,x_2\}$ and $\en(P^y)=\{y_1,y_2\}$.

    \begin{subclaim}\label{ClEquiv}
      If $P_1,P_2$ are two vertex-disjoint paths from $\{x_1,y_1\}$ to $\{x_2,y_2\}$, then $|P_1|+|P_2|\equiv|P^x|+|P^y|\bmod 4$.
    \end{subclaim}

    \begin{proof}
      Notice that $T_1(x,y)$ has an $(x,y)$-path of length 1 and an $(x,y)$-path of length 2, $T_2(x,y)$ has an $(x,y)$-path of length 3 and an $(x,y)$-path of length 4. If $G_1$ and $G_2$ have both construction $T_1$ or have both construction $T_2$, then $|P^x|+|P^y|\equiv|P_1|+|P_2|\equiv 3\bmod 4$; if one of $G_1,G_2$ has construction $T_1$ the other has construction $T_2$, then $|P^x|+|P^y|\equiv|P_1|+|P_2|\equiv 1\bmod 4$.
    \end{proof}

    \begin{subclaim}\label{ClNoComponent}
      $V(G)=V(H_1)\cup V(H_2)\cup V(P^x)\cup V(P^y)$.
    \end{subclaim}

    \begin{proof}
      Let $H$ be a component of $G-H_1-H_2-P^x-P^y$. We claim that there is an even path between two vertices in $P^x\cup P^y$ and with all internal vertices in $H$. Suppose first that $H$ has at least three neighbors in $P^x\cup P^y$, say $u_1,u_2,u_3\in N_{P^x\cup P^y}(H)$. Then there are three internally-disjoint paths $P_1,P_2,P_3$ from $u\in V(H)$ to $u_1,u_2,u_3$, respectively. It follows that either $P_1uP_2$ or $P_1uP_3$ or $P_2uP_3$ is an even path, as desired. Now assume that $H$ has only two neighbors $u_1,u_2\in V(P^x\cup P^y)$. If $H$ is nontrivial, then by Claim \ref{ClOddCycle}, there is an adjustable $(u_1,u_2)$-path in $G[V(H)\cup\{u_1,u_2\}]$, which contains an even path from $u_1$ to $u_2$. If $H$ is trivial, say $V(H)=\{u\}$, then $u_1uu_2$ is an even path from $u_1$ to $u_2$, as desired.

      Now let $P$ be an even path with $\en(P)=\{u_1,u_2\}\subseteq V(P^x)\cup V(P^y)$, with all internal vertices in $H$. Suppose first that $u_1\in V(P^x)$ and $u_2\in V(P^y)$. Notice that $G_1$ contains an adjustable $(x_1,y_1)$-path. Together with $P^x[x_1,u_1]$ and $P^y[y_1,u_2]$, we get an adjustable $(u_1,u_2)$-path, which contains an even $(u_1,u_2)$-path $P_1$ in $G_1\cup P^x[x_1,u_1]\cup P^y[y_1,u_2]$. Similarly there is an even $(u_1,u_2)$-path $P_2$ in $G_2\cup P^x[x_2,u_1]\cup P^y[y_2,u_2]$. It follows that $P\cup P_1\cup P_2$ is a $\varTheta^e$, a contradiction.

      Now assume without loss of generality that both $u_1,u_2\in V(P^x)$, and that $x_1,u_1,u_2,x_2$ appear in this order along $P^x$. Let $P_1=P^x[x_1,u_1]u_1Pu_2P^x[u_2,x_2]$. By Claim \ref{ClEquiv}, $|P_1|+|P^y|\equiv|P^x|+|P^y|\bmod 4$, implying that $|P^x[u_1,u_2]|\equiv|P|\bmod 4$. It follows that $P^x[u_1,u_2]u_2Pu_1$ is a $(0\bmod 4)$-cycle, a contradiction.
    \end{proof}

    \begin{subclaim}\label{ClTwoEdge}
      There are at most two edges between $P^x$ and $P^y$.
    \end{subclaim}

    \begin{proof}
      Here we say two edges $u_1v_1$ and $u_2v_2$ with $u_1,u_2\in V(P^x)$, $v_1,v_2\in V(P^y)$ are \emph{crossed} if $u_1$ appears before $u_2$ in $P^x$ and $v_2$ appears before $v_1$ in $P^y$. We first claim that each two edges between $P^x$ and $P^y$ are not crossed. Suppose otherwise that $u_1v_1$ and $u_2v_2$ are crossed. If $u_1u_2\in E(P^x)$ and $v_1v_2\in E(P^y)$, then $u_1u_2v_2v_1u_1$ is a 4-cycle, a contradiction. So assume that $|P^x[u_1,u_2]|+|P^y[v_1,v_2]|\geq 3$. Let $P_1=P^x[x_1,u_1]u_1v_1P^y[v_1,y_2]$ and $P_2=P^y[y_1,v_2]v_2u_2P^x[u_2,x_2]$. Then $P_1,P_2$ are two vertex-disjoint paths from $\{x_1,y_1\}$ to $\{x_2,y_2\}$ with $|P_1|+|P_2|<|P^x|+|P^y|$, contradicting the choice of $P^x,P^y$.

      Now let $u_1v_1,u_2v_2,u_3v_3$ be three edges between $P^x$ and $P^y$. Since each two of the three edges are not crossed, we can assume that $u_1,u_2,u_3$ appear in this order along $P^x$ and $v_1,v_2,v_3$ appear in this order along $P^y$. We choose $u_1v_1,u_2v_2,u_3v_3$ such that $|P^x[u_1,u_3]|+|P^y[v_1,v_3]|$ is as small as possible, which follows that they are the only edges between $P^x[u_1,u_3]$ and $P^y[v_1,v_3]$.

      Let $C_1=P^x[u_1,u_2]u_2v_2P^y[v_2,v_1]v_1u_1$ and $C_2=P^x[u_2,u_3]u_3v_3P^y[v_3,v_2]v_2u_2$. If both $C_1$ and $C_2$ are triangle, then $P^x[u_1,u_3]u_3v_3P^y[v_3,v_1]v_1u_1$ is a 4-cycle, a contradiction. Thus we assume without loss of generality that $C_1$ is not a triangle, which implies that $|C_1|\geq 5$. Notice that all the vertices in $V(C_1)\backslash\{u_1,u_2,v_1,v_2\}$ have degree 2 in $G$. If $u_1=u_2$, then two adjacent vertices in $P^y(v_1,v_2)$ are of degree 2, a contradiction. Thus we have that $u_1\neq u_2$ and similarly $v_1\neq v_2$. Clearly $\{u_1,v_2\}$ is a cut of $G$. Let $G'$ be the switching of $G$ at $\{u_1,v_2\}$. Then $G'$ has two adjacent vertices of degree 2, a contradiction.
    \end{proof}
    
    By Claims \ref{ClNoComponent} and \ref{ClTwoEdge}, we have that $n=|P^x|+|P^y|+|V(H_1)|+|V(H_2)|+2$, and $e(G)\leq|P^x|+|P^y|+\rho(V(H_1))+\rho(V(H_2))+2$.

    Suppose first that both $G_1$, $G_2$ have construction $T_1$. Then $n=|P^x|+|P^y|+4$, and $e(G)=|P^x|+|P^y|+6$ (notice that in this case $x_1y_1$ and $x_2y_2$ are the two edges between $P^x$ and $P^y$). Recall that $|P^x|+|P^y|\equiv 3\bmod 4$. It follows that $n\geq 7$ and $e(G)=n+2\leq\lfloor\frac{19}{12}(n-1)\rfloor$.

    Suppose second that $G_1$ has construction $T_1$ and $G_2$ has construction $T_2$. Then $n=|P^x|+|P^y|+7$, and $e(G)\leq|P^x|+|P^y|+11$. Recall that $|P^x|+|P^y|\equiv 1\bmod 4$. If $|P^x|+|P^y|=1$, then either $x_1=x_2,P^y=y_1y_2$ or $P^x=x_1x_2,y_1=y_2$. Since $x_1y_1\in E(G)$ and $x_2y_2\notin E(G)$, there is only one edges between $P^x$ and $P^y$. Thus $n=8$ and $e(G)=11$, as desired. If $|P^x|+|P^y|\geq 5$, then $n\geq 12$ and $e(G)\leq n+4\leq\lfloor\frac{19}{12}(n-1)\rfloor$.

    Suppose third that both $G_1$, $G_2$ have construction $T_2$. Then $n=|P^x|+|P^y|+10$, and $e(G)\leq|P^x|+|P^y|+16$. Recall that $|P^x|+|P^y|\equiv 3\bmod 4$. It follows that $n\geq 13$ and $e(G)\leq n+6\leq\lfloor\frac{19}{12}(n-1)\rfloor$.
  \end{proof}

  By Claim \ref{ClNoGoodCut}, we see that if $x$ is a vertex of degree 2 in $G$, then its two neighbors are nonadjacent. Since $G$ is 2-connected and planar, every face of $G$ is (bounded by) a cycle. By a \emph{3-path} we mean a path of order 3.

  \begin{claim}\label{ClFaceJoint}
    Suppose $C_1,C_2$ are two faces of $G$. If $C_1$ and $C_2$ are joint, then they intersect at a vertex, or an edge, or a 3-path.
  \end{claim}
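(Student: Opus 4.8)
The plan is to analyze the common part $D := C_1\cap C_2$. Since $D$ is a subgraph of the cycle $C_1$, it is a vertex-disjoint union of \emph{common arcs} (paths, possibly single vertices). The claim is then equivalent to showing (i) each common arc has at most two edges, and (ii) $D$ consists of a single common arc; together these force $D$ to be a vertex, an edge, or a $3$-path.

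For (i) I would use a degree argument. If $v$ is an internal vertex of a common arc, then both edges of the arc at $v$ lie on the boundaries of $C_1$ and of $C_2$, so the two faces $f_1,f_2$ bounded by $C_1,C_2$ already occupy both angular sides at $v$; hence $v$ can carry no further incident edge and $\deg_G(v)=2$. If some common arc had at least three edges it would contain two adjacent internal vertices, both of degree $2$, contradicting the fact (noted after Claim~\ref{ClSubsetU}) that no two degree-$2$ vertices of $G$ are adjacent. Thus every common arc is a vertex, an edge, or a $3$-path.

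The hard part is (ii): ruling out two or more common arcs. First I would record the planar picture. The closed region $\overline{f_1}\cup\overline{f_2}$ is obtained by gluing two disks along the $m$ common arcs, so an Euler-characteristic count shows it is a genus-$0$ surface with exactly $m$ boundary circles, each circle being the union of one gap arc of $C_1$ and one gap arc of $C_2$. If $m\ge 2$ there are at least two such boundary circles, and they are pairwise vertex-disjoint cycles of $G$. Fix one boundary circle $\gamma$; exactly two endpoints of common arcs lie on it, say $t,t'$. Because the interior vertices of $\gamma$ are incident only to $f_1,f_2$ on one side and to the single complementary region (its ``hole'') on the other, every path from $\gamma$ together with its hole to the remainder of $G$ must pass through one of the two common arcs leaving $\gamma$ at $t$ or at $t'$. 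Hence $\{t,t'\}$ is a $2$-cut.

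To finish I would invoke Claim~\ref{ClNoGoodCut}. Deleting $\{t,t'\}$ separates the side containing $\gamma$ (nonempty, as $\gamma$ has at least three vertices) from a side containing a second boundary circle $\gamma'$ (nonempty and disjoint from $\{t,t'\}$). If this produces more than two components we already contradict Claim~\ref{ClTwoComponent}; otherwise $\{t,t'\}$ is a $2$-cut with both components nontrivial, and should the $\gamma$-side reduce to a single vertex then $\gamma$ is a triangle and $tt'\in E(G)$ — in either case $\{t,t'\}$ is a good cut, contradicting Claim~\ref{ClNoGoodCut}. The main obstacle, requiring the most care in the full write-up, is exactly the planar/face-incidence reasoning that $\{t,t'\}$ disconnects $G$ (that the common arcs are the only links between a boundary circle with its hole and the rest of $G$), together with verifying the cyclic order of the common arcs so that each boundary circle carries precisely two common-arc endpoints.
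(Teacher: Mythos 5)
Your proposal is sound and reaches the claim by a genuinely different route from the paper. You decompose $C_1\cap C_2$ into common arcs, bound each arc's length by noting that its internal vertices have degree $2$ (a face-rotation argument), and then rule out a second arc by showing that the two special points $t,t'$ of a boundary circle of $\overline{f_1}\cup\overline{f_2}$ form a $2$-cut that must be good, contradicting Claim~\ref{ClNoGoodCut}. The paper argues more locally: a face has no chord (a chord would give a good cut), so any two common vertices $u_1,u_2$ that are non-consecutive on $C_1$ are non-adjacent in $G$ and already form a $2$-cut; since that cut is not good it has a trivial component $\{u\}$, which forces the intersection to contain the $3$-path $u_1uu_2$ with $d(u)=2$, and a fourth common vertex then repeats the argument to produce two adjacent degree-$2$ vertices, contradicting Claim~\ref{ClSubsetU}. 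The paper's version avoids the surface-gluing bookkeeping entirely and obtains the $3$-path ``for free'' as the trivial component of a non-good cut, whereas yours cleanly separates the two issues (length of an arc versus number of arcs) at the cost of the Euler-characteristic analysis you yourself flag as the main obstacle; that analysis does go through, but three points need care in a full write-up: (1) when a common arc is a single vertex the glued region is not a surface there and two boundary circles can share their special points, so ``pairwise vertex-disjoint'' fails (the cut argument survives, since only $\gamma'\setminus\{t,t'\}$ must lie on the far side); (2) you handle the case that the $\gamma$-side of the cut is trivial but not the case that the \emph{other} side is trivial --- that case also forces $tt'\in E(G)$ and hence a good cut, but it needs a line; (3) the degree-$2$ claim for internal arc vertices should invoke the fact that, by $2$-connectivity, every face boundary is a cycle and therefore cannot visit a vertex twice.
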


  \begin{proof}
    We first remark that every face of $G$ has no chord: If $C$ is a face with a chord $u_1u_2$. Then $\{u_1,u_2\}$ is a good cut of $G$, contradicting Claim \ref{ClNoGoodCut}.

    Suppose that $u_1,u_2\in V(C_1)\cap V(C_2)$ with $u_1u_2\notin E(C_1)$. Then, $u_1u_2\notin E(G)$. This implies that $\{u_1,u_2\}$ is a cut of $G$, which is not a good cut by Claim \ref{ClNoGoodCut}. Let $u$ be the vertex in the trivial component of $G-\{u_1,u_2\}$. It follows that $u_1uu_2$ is a 3-path in both $C_1,C_2$. If $V(C_1)\cap V(C_2)=\{u_1,u,u_2\}$, then $C_1,C_2$ intersect at the 3-path. Suppose now that there is a forth vertex $v\in V(C_1)\cap V(C_2)$. Then $uv\notin E(G)$. By the analysis above we see that $uu_1v$ or $uu_2v$ is a 3-path in both $C_1,C_2$. Now $u,u_1$ or $u,u_2$ are two adjacent vertices of degree 2, a contradiction.
  \end{proof}

  \begin{claim}\label{ClOneTrangle}
    $G$ has at most one triangle.
  \end{claim}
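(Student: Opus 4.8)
The plan is to assume for contradiction that $G$ has two distinct triangles $T_1,T_2$ and to produce, in every case, either a $(0\bmod 4)$-cycle or a good cut, the latter contradicting Claim~\ref{ClNoGoodCut}. Two preliminary observations drive everything. First, every triangle is an odd cycle of length $3\equiv 3\bmod 4$, so $|T_1|\equiv|T_2|\bmod 4$ and the lemmas on pairs of odd cycles apply. Second, every vertex of a triangle has degree at least $3$: a degree-$2$ vertex of a triangle would have its two (adjacent) triangle-neighbours as its only neighbours, contradicting the remark after Claim~\ref{ClNoGoodCut}. I split according to $|V(T_1)\cap V(T_2)|$. If $T_1,T_2$ share two vertices then they share the edge between them, so $T_1\cup T_2$ is a $K_4$ minus an edge and contains a $4$-cycle, a $(0\bmod 4)$-cycle. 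Hence I may assume $|V(T_1)\cap V(T_2)|\le 1$.

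For the disjoint case $V(T_1)\cap V(T_2)=\emptyset$, I apply Menger's theorem to the disjoint vertex sets $V(T_1),V(T_2)$. Either there are three internally disjoint $T_1$--$T_2$ paths (whose internal vertices avoid $V(T_1)\cup V(T_2)$), or some $2$-set $S=\{x,y\}$ meets every such path. In the first case Lemma~\ref{LeTwoCycleBrige}(3) gives a $(0\bmod 4)$-cycle. In the second case $S$ is a cut of $G$, so by Claim~\ref{ClTwoComponent} it has exactly two components $H_1,H_2$, with $V(T_1)\setminus S$ in one (say $H_1$) and $V(T_2)\setminus S$ in the other; both remnants are nonempty since a triangle has three vertices. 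Then $V(T_1)\subseteq V(H_1)\cup S$ and $V(T_2)\subseteq V(H_2)\cup S$, so $G[V(H_1)\cup\{x,y\}]$ and $G[V(H_2)\cup\{x,y\}]$ each span a triangle, i.e.\ an odd cycle. Thus $\{x,y\}$ is a good cut, contradicting Claim~\ref{ClNoGoodCut}. Note this argument is uniform and needs no separate treatment of where the vertices of $S$ lie.

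Finally I treat $V(T_1)\cap V(T_2)=\{v\}$, writing $T_1=va_1a_2$, $T_2=vb_1b_2$. Since $G$ is $2$-connected (Claim~\ref{ClTwoConnected}), $G-v$ is connected. If a single vertex $w$ separates $\{a_1,a_2\}$ from $\{b_1,b_2\}$ in $G-v$, then $\{v,w\}$ is a $2$-cut of $G$; as in the previous paragraph its two sides respectively span $T_1$ and $T_2$ (each triangle loses at most the cut vertices $v$ and $w$, which are returned), so $\{v,w\}$ is good, a contradiction. Otherwise there are two vertex-disjoint $\{a_1,a_2\}$--$\{b_1,b_2\}$ paths $Q_1,Q_2$ in $G-v$ with all internal vertices off $T_1\cup T_2$; say $Q_1$ joins $a_1,b_1$ and $Q_2$ joins $a_2,b_2$. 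If $Q_1$ or $Q_2$ is even, Lemma~\ref{LeTwoCycleBrige}(2) yields a $(0\bmod 4)$-cycle. If both are odd, write $|Q_1|=2p+1$, $|Q_2|=2q+1$; since every edge of the multigraph obtained from $T_1\cup T_2\cup Q_1\cup Q_2$ by contracting $Q_1,Q_2$ to single (odd-length) edges is odd and this multigraph has only five branch vertices, its even cycles all have four edges, with lengths $|Q_1|+3$, $|Q_2|+3$, and $|Q_1|+|Q_2|+2$, that is $\equiv 2p,\,2q,\,2(p+q)\bmod 4$. These cannot all avoid $0\bmod 4$, because $p$ and $q$ both odd forces $p+q$ even; so a $(0\bmod 4)$-cycle always appears.

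The hard part will be the shared-vertex case: one cannot directly invoke a single lemma once both connecting paths are odd, and the cleanest finish is the short explicit cycle-length count above (essentially a degenerate variant of Lemma~\ref{LeTwoCycleBrige}). The other delicate point, handled uniformly above, is checking that a Menger separator---even one containing triangle vertices---still certifies a good cut, which hinges on the fact that cut vertices are ``returned'' to both sides so that each triangle survives intact in $G[V(H_i)\cup\{x,y\}]$. Notably, this proof uses $2$-connectivity and the absence of a good cut but not planarity.
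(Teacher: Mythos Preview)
Your proof is correct and follows essentially the same route as the paper's: edge-sharing triangles give a $4$-cycle; disjoint triangles yield either three disjoint $T_1$--$T_2$ paths (Lemma~\ref{LeTwoCycleBrige}(3)) or a good cut; and for triangles sharing one vertex, the absence of a good cut forces two disjoint paths in $G-v$, after which the parity analysis of the cycle lengths $|Q_1|+3$, $|Q_2|+3$, $|Q_1|+|Q_2|+2$ matches the paper's explicit case split on $|P_i|\bmod 4$. Your Menger-based justification that a $2$-separator is a good cut is just an explicit unpacking of what the paper invokes in one line via Claim~\ref{ClNoGoodCut}.
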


  \begin{proof}
    Let $C_1$, $C_2$ be two triangles of $G$. If $C_1,C_2$ intersect at an edge, then $C_1\cup C_2$ contains a 4-cycle, a contradiction. If $C_1,C_2$ are vertex-disjoint, then by Claim \ref{ClNoGoodCut} there are three vertex-disjoint paths $P_1,P_2,P_3$ from $C_1$ to $C_2$. By Lemma \ref{LeTwoCycleBrige}, $C_1\cup C_2\cup P_1\cup P_2\cup P_3$ contains a $(0\bmod 4)$-cycle, a contradiction. Now assume that $C_1$ and $C_2$ intersect at a vertex $x$.

    Recall that $G$ has no good cut. There are two vertex-disjoint paths $P_1,P_2$ from $C_1-x$ to $C_2-x$ in $G-x$. Set $C_1=xy_1y_2x$, $C_2=xz_1z_2x$ and $\en(P_i)=\{y_i,z_i\}$, $i=1,2$. If $P_1$ is even, then $C_1\cup C_2\cup P_1$ contains a $(0\bmod 4)$-cycle by Lemma \ref{LeTwoCycleBrige}. If $|P_1|\equiv 1\bmod 4$, then $P_1z_1xy_2y_1$ is a $(0\bmod 4)$-cycle. Now assume that $|P_1|\equiv 3\bmod 4$, and similarly, $|P_2|\equiv 3\bmod 4$. Thus $P_1z_1z_2P_2y_2y_1$ is a $(0\bmod 4)$-cycle, a contradiction.
  \end{proof}

  \begin{claim}\label{ClTwo5Face}
    $G$ has at most five 5-faces.
  \end{claim}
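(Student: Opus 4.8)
The plan is to pin down exactly how two $5$-faces can meet, then to show that any family of $5$-faces must emanate from a single common vertex, and finally to bound the number of resulting ``petals'' by a local count. Throughout I use that a $5$-face is an odd cycle of length $5\equiv 1\bmod 4$, so any two (or three) of them satisfy the congruence hypotheses of Lemmas~\ref{LeTwoCycleBrige}, \ref{LeThreeCycleBridge} and~\ref{LeThreeCyclePath}.

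\emph{First I would show that no two $5$-faces are vertex-disjoint.} If $C_1,C_2$ are vertex-disjoint $5$-faces, then a $2$-set separating them would (by Claim~\ref{ClTwoComponent}) leave two components, one carrying the odd cycle $C_1$ and the other $C_2$, hence be a good cut, contradicting Claim~\ref{ClNoGoodCut}. So by Menger's theorem there are three vertex-disjoint paths between $C_1$ and $C_2$, and Lemma~\ref{LeTwoCycleBrige}(3) yields a $(0\bmod 4)$-cycle. Next I would show \emph{no two $5$-faces share an edge}: by Claim~\ref{ClFaceJoint} they could only share exactly an edge $uv$, and then the two length-$4$ arcs avoiding $uv$ form an $8$-cycle. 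Moreover a shared $3$-path $u_1uu_2$ has a degree-$2$ centre $u$, whose two edges then border both faces and so are shared; thus a shared $3$-path is impossible too. Claim~\ref{ClFaceJoint} therefore forces \emph{any two $5$-faces to meet in exactly one vertex}.

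\emph{The key structural step is to upgrade ``pairwise meeting in a vertex'' to ``meeting in a common vertex.''} Suppose $C_1,C_2,C_3$ are $5$-faces with $C_1\cap C_2=\{a\}$, $C_1\cap C_3=\{b\}$, $C_2\cap C_3=\{c\}$ and $a,b,c$ pairwise distinct. Each $C_i$, being an odd cycle through two of its vertices, is an adjustable path between them; taking $C_1$ as an adjustable $(a,b)$-path, $C_3$ as an adjustable $(b,c)$-path and $C_2$ as an adjustable $(c,a)$-path, and noting that these are pairwise internally disjoint (their pairwise intersections are exactly the shared endpoints $a,b,c$), we obtain an $N^o$, which contains a $(0\bmod 4)$-cycle by Lemma~\ref{LeThetaNH} --- a contradiction. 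Hence any three $5$-faces share a common vertex. Consequently, once there are at least three $5$-faces, fixing $C_1,C_2$ with $C_1\cap C_2=\{a\}$, every further $5$-face $C_i$ shares with $C_1,C_2$ a common vertex that must lie in $C_1\cap C_2=\{a\}$; so \emph{every $5$-face passes through $a$ and they meet pairwise only at $a$}.

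\emph{It remains to bound the number of petals} --- $5$-cycles through $a$ that are faces and meet pairwise only at $a$. Here I would combine: (i) $2$-connectivity, so the petals are joined by paths avoiding $a$; (ii) Lemma~\ref{LeTwoCycleBrige}(2), which forbids an \emph{even} joining path between two petals, forcing every such path to be odd; and (iii) Lemma~\ref{LeThreeCyclePath}, forbidding an external vertex joined to three petals by disjoint paths, together with Lemma~\ref{LeThreeCycleBridge}, forbidding three petals joined cyclically --- these strongly constrain the inter-petal connection pattern. Crucially, since no two adjacent vertices have degree~$2$ (Claim~\ref{ClSubsetU}), the interior $s_i\!-\!p_i\!-\!q_i\!-\!t_i$ of each petal cannot be all of degree~$2$, so each petal carries forced external edges; following the parities these induce around $a$ in the planar embedding and reconciling them with (ii)--(iii) caps the number of petals. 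I expect this final local parity/planarity count --- extracting precisely the bound $5$ from the forced interior connections, the odd-path constraint, and the rotation at $a$ --- to be the main obstacle, the earlier steps being comparatively routine given the lemmas.
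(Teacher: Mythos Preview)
Your argument that two $5$-faces cannot share a $3$-path is wrong, and this derails the whole approach. If $C_1,C_2$ share the $3$-path $u_1uu_2$, the two remaining arcs have length $3$ each and their union is a $6$-cycle, not an $8$-cycle; nothing is violated. Your sentence ``whose two edges then border both faces and so are shared; thus a shared $3$-path is impossible'' tries to reduce to the already-handled edge case, but the $8$-cycle argument you gave there only applies when the intersection is \emph{exactly} one edge, not two. In fact two $5$-faces sharing a $3$-path is perfectly consistent with all earlier claims (the centre $u$ has degree~$2$, $\{u_1,u_2\}$ is a cut with one trivial component --- not a good cut --- and Claim~\ref{ClTwoComponent} is satisfied). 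The paper does \emph{not} exclude this configuration for pairs.

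Because of this, your structural step ``all $5$-faces pass through a single common vertex $a$'' is unfounded: some pairs may meet at a vertex and others at a $3$-path. The paper handles this by proving two subclaims --- no three $5$-faces pairwise meet at a $3$-path (a short local count: such a triple forces a vertex $x_1$ of degree~$3$ adjacent to three degree-$2$ vertices, so $\rho(\{x_1,y_1,y_2,y_3\})=6$ violates Claim~\ref{ClSubsetU}), and no three $5$-faces pairwise meet at a vertex (your $N^o$ argument covers distinct intersection points; when the three share one vertex $x$, the no-good-cut hypothesis produces the paths needed for Lemma~\ref{LeThreeCycleBridge} or~\ref{LeThreeCyclePath}). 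With both subclaims, Ramsey $r(3,3)=6$ finishes: six $5$-faces would yield a monochromatic triple. Incidentally, if your false premise \emph{did} hold, the common-vertex case of the paper's second subclaim would already forbid three petals at $a$, so the elaborate ``parity/planarity count to extract the bound $5$'' you flag as the main obstacle is both unnecessary and aiming at the wrong number.
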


  \begin{proof}
    If there are two 5-faces $C_1,C_2$ that intersect at an edge, then $C_1\cup C_2$ contains an 8-cycle, a contradiction. If two 5-faces $C_1,C_2$ are vertex-disjoint, then by Claim \ref{ClNoGoodCut}, there are three vertex-disjoint paths $P_1,P_2,P_3$ from $C_1$ to $C_2$. By Lemma \ref{LeTwoCycleBrige}, $C_1\cup C_2\cup P_1\cup P_2\cup P_3$ contains a $(0\bmod 4)$-cycle, a contradiction. Thus we conclude that each two 5-faces intersect at a vertex or a 3-path by Claim \ref{ClFaceJoint}.

    \begin{subclaim}\label{ClCommonP3}
      There are no three 5-faces that pairwise intersect at a 3-path.
    \end{subclaim}

    \begin{proof}
      Suppose that $C_1,C_2,C_3$ are three 5-faces that pairwise intersect at a 3-path. Let $C_1,C_2$ intersect at $x_1y_1z_1$. It follows that $d(y_1)=2$ and $y_1\notin V(C_3)$. Since $C_2,C_3$ also intersect at a 3-path, we have that either $x_1$ or $z_1\in V(C_3)$ (but not both). Without loss of generality assume that $x_1\in V(C_3)$ and that $C_2,C_3$ intersect at $x_1y_2z_2$. Thus $d(y_2)=2$ and $x_1\in V(C_1)\cap V(C_3)$. This implies that $C_1,C_3$ intersect at a 3-path starting from $x_1$, say $x_1y_3z_3$. It follows that $d(x_1)=3$ and $d(y_1)=d(y_2)=d(y_3)=2$. Set $U=\{x_1,y_1,y_2,y_3\}$. We have that $\rho(U)=6$ with $|U|=4$, contradicting Claim \ref{ClSubsetU}.
    \end{proof}

    \begin{subclaim}\label{ClCommonVertex}
      There are no three 5-faces that pairwise intersect at a vertex.
    \end{subclaim}

    \begin{proof}
      Suppose that $C_1,C_2,C_3$ are three 5-faces that pairwise intersect at a vertex. Suppose first that $V(C_1)\cap V(C_2)\cap V(C_3)=\emptyset$. Let $C_i,C_{i+1}$ intersect at $x_i$, $i=1,2,3$ (the subscripts are taken modular 3). Then $C_i$ is an adjustable $(x_{i-1},x_i)$-path. It follows that $C_1\cup C_2\cup C_3$ is an $N^o$, a contradiction. Now suppose that $V(C_1)\cap V(C_2)\cap V(C_3)=\{x\}$.

      If there is a component $H$ of $G-C_1-C_2-C_3$ such that $H$ has neighbors in $C_i-x$ for all $i=1,2,3$, then there are three pairwise internally-disjoint paths $P_1,P_2,P_3$ from $y\in V(H)$ to $C_1-x,C_2-x,C_3-x$, respectively. By Lemma \ref{LeThreeCyclePath}, $C_1\cup C_2\cup C_3\cup P_1\cup P_2\cup P_3$ contains a $(0\bmod 4)$-cycle, a contradiction. Now we assume that there are no component of $G-C_1-C_2-C_3$ that has neighbors in all $C_i-x$, $i=1,2,3$.

      We will show that there is a path from $C_1-x$ to $C_2-x$ in $G-C_3$. Recall that $C_1,C_2,C_3$ are three faces of $G$ with a common vertex $x$. We suppose that $C_1,C_2,C_3$ are distributed around $x$ counterclockwise, and we give orientations of $C_1,C_2,C_3$ counterclockwise. Suppose that there are no bridges from $C_1-x$ to $C_2-x$ in $G-C_3$. It follows that for every component $H$ of $G-C_1-C_2-C_3$, either $N(H)\subseteq V(C_1)\cup V(C_3)$ or $N(H)\subseteq V(C_2)\cup V(C_3)$. By our distribution of $C_1,C_2,C_3$, there is a vertex $y\in V(C_3)\backslash\{x\}$ such that for every component $H$ of $G-C_1-C_2-C_3$, either $N(H)\subseteq V(C_1)\cup V(C_3[x,y])$ or $N(H)\subseteq V(C_2)\cup V(C_3[y,x])$. It follows that $\{x,y\}$ is a good cut of $G$, contradicting Claim \ref{ClNoGoodCut}.

      Now we conclude that there is a path $P_1$ from $C_1-x$ to $C_2-x$ in $G-C_3$. By the similar analysis, there is a path $P_2$ from $C_2-x$ to $C_3-x$ in $G-C_1$, and there is a path $P_3$ from $C_3-x$ to $C_1-x$ in $G-C_2$. By Lemma \ref{LeThreeCycleBridge}, $C_1\cup C_2\cup C_3\cup P_1\cup P_2\cup P_3$ contains a $(0\bmod 4)$-cycle, a contradiction.
    \end{proof}

    Notice that the Ramsey number $r(3,3)=6$. If $G$ has at least six 5-faces, then three of them pairwise intersect at either a vertex or a 3-path, contradicting Claims \ref{ClCommonP3} and \ref{ClCommonVertex}.
  \end{proof}

  Let $f$ be the number of faces of $G$, and $f_i$, $i\geq 3$, be the number of $i$-faces of $G$. By Claims \ref{ClOneTrangle} and \ref{ClTwo5Face}, and that $G$ has no $(0\bmod 4)$-cycle, we have that $f_3\leq 1$, $f_4=0$ and $f_5\leq 5$. By Eular's formula,
  $$n+f=2+e(G)=2+\frac{1}{2}\sum_{i\geq 3}if_i\geq 2+3f-\frac{3}{2}f_3-f_4-\frac{1}{2}f_5.$$
  That is
  $$f\leq\frac{1}{2}\left(n-2+\frac{3}{2}f_3+f_4+\frac{1}{2}f_5\right)\leq\frac{1}{2}(n+2).$$
  Thus $e(G)=n+f-2\leq\frac{3}{2}n-1\leq\frac{19}{12}(n-1)$ (when $n\geq 7$), implying that $e(G)\leq\lfloor\frac{19}{12}(n-1)\rfloor$.

  The proof is complete.

\section{Extremal graphs}\label{extremal}

Define $L_8$ and $L_{13}$ to be the graphs show in Figure 6.

\begin{center}
\begin{picture}(220,90)\label{FiL8L13}
\thicklines \put(20,10){\put(0,0){\circle*{4}} \put(0,40){\circle*{4}} \put(20,20){\circle*{4}} \put(20,40){\circle*{4}} \put(40,0){\circle*{4}} \put(40,40){\circle*{4}} \put(20,70){\circle*{4}} \put(60,20){\circle*{4}}
\put(0,0){\line(1,0){40}} \put(0,0){\line(0,1){40}} \put(20,20){\line(0,1){50}} \put(40,0){\line(0,1){40}} \put(20,20){\line(-1,-1){20}} \put(20,20){\line(1,-1){20}} \put(20,70){\line(-2,-3){20}} \put(20,70){\line(2,-3){20}} \put(40,0){\line(1,1){20}} \put(40,40){\line(1,-1){20}} }

\put(100,10){\put(15,0){\circle*{4}} \put(0,25){\circle*{4}} \put(0,45){\circle*{4}} \put(30,25){\circle*{4}} \put(30,45){\circle*{4}} \put(15,70){\circle*{4}} \put(50,35){\circle*{4}} \put(85,0){\circle*{4}} \put(70,25){\circle*{4}} \put(70,45){\circle*{4}} \put(100,25){\circle*{4}} \put(100,45){\circle*{4}} \put(85,70){\circle*{4}}
\put(15,0){\line(3,5){15}} \put(15,0){\line(-3,5){15}} \put(0,25){\line(0,1){20}} \put(30,25){\line(0,1){20}} \put(15,70){\line(3,-5){15}} \put(15,70){\line(-3,-5){15}} \put(0,45){\line(1,0){30}} \put(15,0){\line(1,0){70}} \put(15,0){\line(1,1){70}} \put(15,70){\line(1,-1){70}}
\put(85,0){\line(3,5){15}} \put(85,0){\line(-3,5){15}} \put(70,25){\line(0,1){20}} \put(100,25){\line(0,1){20}} \put(85,70){\line(3,-5){15}} \put(85,70){\line(-3,-5){15}} \put(70,45){\line(1,0){30}} }
\end{picture}

\small Figure 6. The graphs $L_8$ and $L_{13}$.
\end{center}

For $n\geq 2$, we define the graph $G_n$ as follows: Let
\begin{align*}
  n-1   &=12q_1+r_1, 0\leq r_1\leq 11;\\
  r_1   &=7q_2+r_2, 0\leq r_2\leq 6;\\
  r_2   &=2q_3+r_3, 0\leq r_3\leq 1.
\end{align*}
Let $G_n$ be a connected graph consisting of $q_1$ blocks isomorphic to $L_{13}$, $q_2$ blocks isomorphic to $L_8$, $q_3$ blocks isomorphic to $K_3$ and $r_3$ blocks isomorphic to $K_2$. One can compute that $G_n$ contains no $(0\bmod 4)$-cycle and $e(G_n)=\lfloor\frac{19}{12}(n-1)\rfloor$.

Let $\mathcal{C}_{0\bmod 4}$ be the set of all $(0\bmod 4)$-cycles. We have
$$\ex(n,\mathcal{C}_{0\bmod 4})=\left\lfloor\frac{19}{12}(n-1)\right\rfloor.$$

\section{Acknowledgements}
The research of Gy\H{o}ri and Salia was supported by NKFIH, grant K132696.  
The research of Li was supported by NSFC (12071370) and Shaanxi Fundamental Science Research Project for Mathematics and Physics (22JSZ009).
The research of Tompkins was supported by NKFIH, grant K135800. The research of Zhu was supported by HORIZON, grant 101086712.

\bibliographystyle{abbrv}
\bibliography{references.bib}

\end{document}